\def\const{\text{\rm const}}
\def\supp{\text{\rm supp}\,}
\def\to{\rightarrow}
\def\bs{\bigskip}\def
\def\no{\noindent}
\def\PP{{\mathcal{P}}}
\def\QQ{{\mathcal{Q}}}
\def\K{{\mathcal K}}
\def\Z{{\mathbb{Z}}}
\def\N{{\mathbb{N}}}
\def\R{{\mathbb R}}
\def\C{{\mathbb{C}}}
\def\P{\mathcal{P}}
\def\e{\varepsilon}
\def\L{\Lambda}
\def\l{\lambda}
\def\G{\Gamma}
\def\lan{\lambda_n}
\def\Res{\text{\rm Res}}
\def\ti{\tilde}
\def\sm{\setminus}
\def\sign{\text{\rm sign}}
\newtheorem{theorem}{Theorem}[section]
\newtheorem{lemma}[theorem]{Lemma}
\newtheorem{corollary}[theorem]{corollary}
\newtheorem{proposition}[theorem]{Proposition}
\newtheorem{remark}[theorem]{Remark}
\numberwithin{equation}{section}
\author{Alexei Poltoratski}
\address{Texas A\&M University
\\ Department of Mathematics\\
College Station, TX 77843, USA }
\email{alexeip@math.tamu.edu}
\thanks{The author is supported by
N.S.F. grant DMS-1101278}
\title{Bernstein's problem on weighted \linebreak polynomial approximation}
\begin{document}

\begin{abstract}
We formulate  a necessary and sufficient condition for polynomials to be dense
in a space of continuous functions on the real line, with respect to Bernstein's
weighted uniform norm. Equivalently, for a positive finite measure $\mu$ on the real line we give a criterion
for density of polynomials in $L^p(\mu)$.
\end{abstract}

\maketitle

\ms
\section{\bf Introduction}


\ms\no Let $W:\R\to [1,\infty)$ be a continuous function satisfying $x^n=o(W(x))$ for any $n\in\N$, as $x\to\pm\infty$.
Denote by $C_W$ the space of all continuous functions $f$ on $\R$ such that $f/W\to 0$ as $x\to\pm\infty$ with the norm
\begin{equation}||f||_W=\sup_\R\frac{|f|}W.\label{norm}\end{equation}
The famous weighted approximation problem posted by Sergei Bernstein in 1924 \cite{Bernstein}
asks to describe the weights $W$ such that polynomials are dense in $C_W$.

\ms\no Throughout the 20th century Bernstein's problem was investigated by many prominent analysts including N. Akhiezer, L. de Branges, L. Carleson, T. Hall, P. Koosis, B. Levin, P. Malliavin, S. Mandelbrojt, S. Mergelyan, H. Pollard and M. Riesz. This activity continues to our day with more recent significant contributions
by A. Bakan,  M. Benedicks, A. Borichev, P. Koosis, M. Sodin and P. Yuditski, among others.
Besides the natural beauty of the original question, such an extensive interest towards Bernstein's problem is generated by numerous links with adjacent fields, including its close relation with the moment problem.

\ms\no
Further information and references on the remarkable history of Bernstein's problem can be found
in two classical surveys by Akhiezer \cite{Akhiezer} and Mergelyan \cite{Mergelyan}, a recent one by  Lubinsky \cite{Lubinsky}, or
in the first volume of Koosis' book \cite{Koosis}.
Despite a number of deep and beautiful results a complete solution is yet to be found.

\ms\no Most of the results on Bernstein's problem belong to one of the two following groups. The first group, containing classical theorems by Akhiezer, Mergelyan and Pollard as well as more
recent results by Koosis, provides conditions on $W$ in terms of the norms of point evaluation functionals. The second group uses the approach pioneered by
de Branges (see \cite{dBr1} or theorem 66 in \cite{dBr}) and further developed by Borichev, Sodin and Yuditski. These results are formulated in terms of existence of entire
functions belonging to certain classes.

\ms\no Both approaches have produced significant progress towards a full solution, although the conditions of density
remained rather implicit. Besides specific examples, the only general explicit results in the literature are a classical theorem by Hall \cite{Hall} and
a theorem on log-convex weights published by Carleson \cite{Carleson}, see section \ref{exandcor}.

\ms\no In the present paper we start by following the second approach mentioned above. We prove a version of de Branges' theorem that claims
existence of extreme annihilating measures. The novelty of the paper is an additional computational step that allows us to make the final statement
more elementary and at the same time more general.
At that stage we utilize the Titchmarsh-Ulyanov theory of $A$-integrals together with some of
the ideas used by N. Makarov and the author in \cite{MIF} and \cite{MIF2}.

\ms\no The main result of the paper is theorem \ref{main} in section \ref{mainresult}. The statement involves the notion of characteristic
sequences introduced in section \ref{char}. In the last part of the paper we discuss relations of theorem \ref{main}
with  a classical result on log-convex weights and a more recent theorem by Borichev and Sodin. To approach the latter, we give a description of zero
sets of Hamburger entire functions and Krein entire functions of zero exponential type.

\bs The contents of the paper are as follows:
\bs

\begin{itemize}

\item[\bf \ref{C_W}:] The definition and duality of $C_W$ in the case of semi-continuous weights
\item [\bf \ref{Cauchy}:] Definitions and notations  for Cauchy integrals
\item [\bf \ref{char}:] Characteristic sequences
\item [\bf \ref{sectionBakan}:] A result of Bakan on equivalence of approximation
in $L^p$ to Bernstein's problem
\item [\bf \ref{mainresult}:] The statement and  discussion of the main result
\item [\bf \ref{Clark}:] Basics of Clark's theory
\item [\bf \ref{decay}:] The relation between asymptotic decay of a Cauchy integral and annihilation of polynomials
by the corresponding measure
\item [\bf \ref{section66}:] A version of de Branges' theorem 66
\item [\bf \ref{TUT}:] Titchmarsh-Ulyanov's theory of $A$-integrals
\item [\bf \ref{sectionmasses}:] Point masses of extreme measures via  $A$-integrals
\item [\bf \ref{proofs}:] Main proofs
\item [\bf \ref{logconv}:] A classical theorem on log-convex weights
\item [\bf \ref{HK}:] A description of zero sets of Hamburger and Krein entire functions
\item [\bf \ref{sectionBS}:] A result by Borichev and Sodin
\item [\bf \ref{charass}:] Asymptotics of characteristic sequences and further applications

\end{itemize}

\bs\no\textbf{Acknowledgement.} I am grateful to Nikolai Makarov and Misha Sodin for introducing me to the general area of Bernstein's problem and for numerous
thought-provoking discussions.

\ms\section{Preliminaries}\label{Preliminaries}

\subsection{Semi-continuous weights}\label{C_W}
In this paper we allow the weight function $W$ to be semi-continuous from below instead of continuous as in most classical papers.
Throughout the rest of the paper we use the following definition.

\ms\no We say that
a  function $W\geqslant 1$ on $\R$ is a \textit{weight} if $W$ is lower semi-continuous and
$x^n=o(W)$ as $|x|\to\infty$.

\ms\no Our weights are also allowed to take infinite values at finite points on $\R$, which makes it possible to study approximation on subsets of the line within
the same general formulation of the problem.  For instance, the classical Weierstrass theorem
answers the question of density of polynomials in $C_W$ with $W$ equal to 1 on an interval and infinity
elsewhere. Another important case of the problem is approximation on discrete sequences (see, for instance, \cite{BS}), which corresponds to the
weights that are infinite outside of a discrete sequence.

\ms\no With a semi-continuous and $\hat\R$-valued $W$ the quantity $||f||_W$, defined as in the introduction, ceases being a norm and becomes a semi-norm. The set
 of continuous functions $g$ such that $g/W\to 0$ at $\pm\infty$ is no-longer complete.

 \ms\no
The semi-norm defined by \eqref{norm} can be made a norm following a standard
procedure. First the space of continuous functions $g$, such that $g/W\to 0$ at $\pm\infty$, needs to be factorized to obtain a space of equivalence classes: $f\thicksim g$ if and only if $||f-g||_W=0$.
After that the factor-space needs to be completed. We denote by $C_W$ the resulting space.

\ms\no Note that if $W$ is continuous and takes only finite values,
$C_W$ coincides with the space of continuous functions defined in the introduction. In the general case, we still have the following property.

\ms\no If $W$ is a weight we say that a measure $\mu$ on $\R$ is $W$-finite if
$$\int Wd|\mu|<\infty.$$

\begin{proposition}
The dual space of $C_W$ consists of $W$-finite measures.
\end{proposition}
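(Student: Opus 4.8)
The plan is to identify $C_W^*$ with the space of $W$-finite measures by a variant of the Riesz representation theorem, transported through the factorization and completion that define $C_W$. First I would observe that if $\mu$ is $W$-finite, then $f\mapsto \int f\,d\mu$ is well-defined on the space of continuous functions $g$ with $g/W\to 0$ at $\pm\infty$, it vanishes on the equivalence class of $0$ (since $\|g\|_W=0$ forces $g=0$ $|\mu|$-a.e. on the set where $W<\infty$, and the singular part of $\mu$ on $\{W=\infty\}$ carries no mass because $\int W\,d|\mu|<\infty$), and it is bounded: $|\int f\,d\mu|\le \int (|f|/W)\,W\,d|\mu|\le \|f\|_W\int W\,d|\mu|$. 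Hence it descends to the factor space and extends by continuity to all of $C_W$, giving one inclusion and the norm bound $\|\mu\|_{C_W^*}\le \int W\,d|\mu|$.

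For the reverse inclusion, let $\Lambda\in C_W^*$. The natural move is to pull back to a concrete function space where Riesz representation applies. I would consider the map $T\colon g\mapsto g/W$ from the pre-factor space into $C_0(\R)$ (or into $C_0$ of the locally compact set $\{W<\infty\}$); this is an isometry onto a subspace, so $\Lambda\circ T^{-1}$ is a bounded linear functional on a subspace of $C_0$, extend it by Hahn–Banach to all of $C_0$ with the same norm, and apply the classical Riesz theorem to get a finite (signed/complex) measure $\nu$ with $\Lambda(g)=\int (g/W)\,d\nu$ for $g$ in the pre-factor space and $\|\nu\|=\|\Lambda\|$. Setting $d\mu = W^{-1}\,d\nu$ (interpreted as $0$ where $W=\infty$) gives $\int W\,d|\mu| = \int d|\nu| = \|\Lambda\| <\infty$, so $\mu$ is $W$-finite and $\Lambda(f)=\int f\,d\mu$ on a dense subspace, hence everywhere on $C_W$. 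Combined with the first paragraph this also yields the isometry $\|\mu\|_{C_W^*}=\int W\,d|\mu|$, and uniqueness of $\mu$ follows because a $W$-finite measure annihilating all $g$ in the dense subspace (which separates points of $\R$ and contains, e.g., compactly supported continuous functions) must be zero.

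The step I expect to require the most care is the handling of the points where $W=\infty$ and the passage through the completion. One must check that the ``test'' functions $g$ with $g/W\to0$ are genuinely dense in $C_W$ after completion (immediate, since $C_W$ is by definition their completion), that $T$ really lands in $C_0$ — here $g/W\to0$ at $\infty$ gives decay at $\pm\infty$, and near a point where $W=\infty$ lower semicontinuity of $W$ gives $g/W\to 0$ as well, so $g/W$ extends continuously by $0$ there, confirming $g/W\in C_0$ of the appropriate locally compact space — and finally that the measure $\nu$ produced by Riesz/Hahn–Banach puts no mass on $\{W=\infty\}$ after the division by $W$, which is exactly the content of $\int W\,d|\mu|<\infty$. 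None of these is deep, but getting the locally-compact-space bookkeeping exactly right for semicontinuous $\hat\R$-valued $W$ is the only nonroutine part.
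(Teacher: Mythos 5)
Your first inclusion (a $W$-finite measure gives a bounded functional that vanishes on the null equivalence classes) is fine. The gap is in the converse, at precisely the step you flagged as delicate: for a merely lower semicontinuous weight the map $T\colon g\mapsto g/W$ does \emph{not} land in $C_0$. Lower semicontinuity does give $g/W\to 0$ at a point where $W=\infty$, but at a point $x_0$ where $W$ is finite it gives only $W(x_0)\leqslant\liminf_{x\to x_0}W(x)$, so $W$ may be strictly smaller at $x_0$ than nearby and $g/W$ can jump up there. For example, $W(0)=1$ and $W(x)=2e^{x^2}$ for $x\neq 0$ is a legitimate weight (lower semicontinuous, $\geqslant 1$, super-polynomial growth), yet $g/W$ is discontinuous at $0$ whenever $g(0)\neq 0$. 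Moreover $\{W<\infty\}$ is in general only an $F_\sigma$ set (a weight can be infinite on a dense countable set, e.g. a suitable series $\sum c_n|x-q_n|^{-1}$ plus growth), so it need not be locally compact. Consequently the image of $T$ lies only in the space of bounded Borel functions vanishing at infinity; a Hahn--Banach extension of $\Lambda\circ T^{-1}$ on that space is represented by a finitely additive set function, and the Riesz representation theorem is not available, so countable additivity of your $\nu$ (hence the existence of the measure $\mu$) does not follow as written.

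The paper's proof avoids exactly this obstacle by using the other characterization of lower semicontinuity: $W$ is the pointwise increasing limit of continuous weights $W_n$. A bounded functional on $C_W$ restricts to a bounded functional on each $C_{W_n}$, where the classical continuous-weight duality (your $C_0$ argument, which is correct in that case) yields a $W_n$-finite measure; by monotonicity these representations are consistent and define a single measure, which is $W$-finite by monotone convergence, and $\cup_n C_{W_n}$ is dense in $C_W$, so the representation holds on all of $C_W$. To repair your argument you would need to run the division-by-$W$ scheme with the continuous approximants $W_n$ and then pass to the limit --- which is essentially the paper's proof --- or otherwise supply a separate argument for countable additivity of the finitely additive extension.
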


\begin{proof}
Consider a sequence of continuous weights $W_n$ such that $W_{n+1}(x)\geqslant W_n(x)$  and $W_n(x)\to W(x)$ for any $x\in\R$.
Note that any bounded linear functional $\mu$ on $C_W$ induces a linear bounded functional on $C_{W_n}$ for any $n$.
Because of monotonicity, $C_{W_n}\subset C_{W_{n+1}}$.
Since any linear bounded functional on $C_{W_n}$ can be identified with a $W_n$-finite measure, again using monotonicity
of $W_n$, we conclude that $\mu$ can be identified with a $W$-finite measure on the set $\cup C_{W_n}$. Since the last
set is dense in $C_W$ (or, more precisely, the set of equivalence classes, containing the elements from $\cup C_{W_n}$,
is dense in $C_W$), $\mu$ can be identified with a $W$-finite measure on the whole $C_W$.

\end{proof}

\ms\no Note that in the general case of semi-continuous $\hat\R$-valued weights, when we say that polynomials are
not dense in $C_W$ that statement  still means  that there exists a continuous $g$ and $\e>0$ such that $ g/W\to 0$ at $\pm\infty$ and
$||g-p||_W>\e$ for every polynomial.
The crucial dual statement, that characterizes non-completeness in the case of continuous weights, still holds for general $W$:
Polynomials are not dense in $C_W$ if and only if there exists a non-zero $W$-finite measure that annihilates polynomials.

\ms\no For the rest of the paper the reader has a choice: to think of $C_W$ as of a semi-normed space of continuous functions,
or as a completed normed space of equivalence classes, described above. This choice will affect neither the statements nor the proofs.

\subsection{Cauchy integrals}\label{Cauchy}
If $\mu$ is a finite measure on $\R$ we denote by $K\mu$ its Cauchy integral
$$K\mu(z)=\frac 1\pi \int \frac 1{t-z}d\mu(t)$$
defined for all $z\not\in \supp\mu$.

\ms\no We denote by $\Pi$ the Poisson measure on $\R$,
$$d\Pi=\frac{dx}{1+x^2}.$$

\ms\no We say that a measure $\mu$ on $\R$ is Poisson-finite if
$$\int\frac{d|\mu|(t)}{1+t^2}<\infty.$$
For the class of Poisson-finite measures we will need a slightly different Cauchy integral:
$$\K\mu(z)=\frac 1\pi \int \left[\frac 1{t-z}-\frac t{1+t^2}\right]d\mu(t).$$

\subsection{Characteristic sequences}\label{char}
We call a real sequence \textit{discrete} if it does not have finite accumulation points. To simplify the definitions we will always assume
that a discrete sequence is infinite and does not have multiple points. A discrete sequence is called one-sided if it is bounded from below or from above
and two-sided otherwise.

\ms\no If $\L=\{\lan\}$ is a discrete sequence we will always assume that it is \textit{enumerated in the natural order}, i.e.
$\lan<\l_{n+1}$, non-negative elements are indexed with non-negative integers and negative
elements with negative integers.

\ms\no For instance, if $\L=\{\lan\}_{n\in\Z}$ is a two sided sequence then
$$...\l_{-n-1}<\l_{-n}<...<\l_{-1}<0\leqslant \l_0<\l_1<...\lan<\l_{n+1}<...$$
Thus a one-sided sequence bounded from below (above) will be enumerated with $n\in\Z, n\geqslant-N$ ($n\in\Z, n<N$), where $N$
is the number of negative (non-negative) elements in the sequence.

\ms\no We  say that a sequence $\L=\{\lan\}$ has upper density $d$ if
$$\limsup_{A\to\infty}\frac{\#[\L\cap (-A,A)]}{2A}=d.$$
If $d=0$ we say that the sequence has zero density.

\ms\no In the remainder of this section we give key definitions used in the main result.

\ms\no A discrete sequence $\L=\{\lan\}$ is called \textit{balanced} if the limit
\begin{equation}
\lim_{N\to\infty}\sum_{|n|<N}\frac {\l_n}{1+\lan^2}
\label{balance}
\end{equation}
exists.

\ms\no Observe that any even sequence (any sequence $\L$ satisfying $-\L=\L$) is balanced.
So is any two-sided sequence sufficiently close to even.
At the same time, a one-sided sequence
has to tend to infinity fast enough to be balanced (the series $\sum \lan^{-1}$ must converge).

\ms\no Let $\L=\{\lan\}$ be a balanced sequence of finite upper density. For each $n, \ \lan\in\L,$ put
$$p_n= \frac 12\left[\log(1+\l_n^2)+\sum_{n\neq k, \ \l_k\in\L}\log\frac{1+\l_k^2}{(\l_k-\lan)^2}\right],$$
where the sum is understood in the sense of principle value, i.e. as
$$\lim_{N\to\infty}\sum_{0<|n-k|<N}\log\frac{1+\l_k^2}{(\l_k-\lan)^2}.$$
We will call the sequence of such numbers $P=\{p_n\}$ the \textit{characteristic sequence} of $\L$.

\ms\no Note that for a sequence of finite upper density the last limit exists for every $n$ if and only if
it exists for some $n$ \textit{if and only if the sequence is balanced}.

\section{Weighted polynomial approximation}\label{midsection}

\subsection{Equivalence between weighted uniform and $L^p$-approximation}\label{sectionBakan}
Close connections between  $L^p$- and weighted uniform approximation
have been known to the experts for a long time. Nevertheless, the formal result that
 reduces the problem of polynomial approximation in $L^p$-spaces to Bernstein's problem
was found by A. Bakan only recently. This result allows us to concentrate on the latter problem for the rest of the paper.

\begin{theorem}\cite{Bakan}\label{tBakan}
Let  $0< p <\infty $ be a constant and let $\mu$ be a positive finite measure on $\R$ such that $L^p(\mu)$ contains all polynomials. Polynomials are dense in $L^p(\mu)$
if and only if $\mu$ can be represented as $\mu=W^{-p}\nu$ for some finite positive measure $\nu$ and a weight $W$ such that
polynomials are dense in $C_W$.
\end{theorem}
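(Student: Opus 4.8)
\textbf{Setup and strategy.} The plan is to prove the two implications separately, using the duality between $L^p(\mu)$ and measures on one side, and the duality between $C_W$ and $W$-finite measures (the Proposition above) on the other. The key elementary fact I would isolate first is that for a positive finite measure $\mu$ and a weight $W$, membership of a continuous function $f$ in $C_W$ is tied to membership of $fW^{-1}$ in $L^\infty$-type spaces, while the substitution $\mu = W^{-p}\nu$ converts the $L^p(\mu)$-norm of $f$ into $\int |f|^p W^{-p}\, d\nu = \int |fW^{-1}|^p\, d\nu$. Thus the scaling $g \mapsto gW^{-1}$ is the bridge between the two approximation problems. I would also record at the outset that, without loss of generality, one may normalize so that $W \geqslant 1$ and that the relevant measures are finite, matching the standing hypotheses.

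\textbf{The ``only if'' direction.} Assume polynomials are dense in $L^p(\mu)$. I would produce the weight $W$ as follows: since $\mu$ is finite and all monomials $x^n$ lie in $L^p(\mu)$, the function $x \mapsto \sum_n c_n |x|^n$ with rapidly decaying positive $c_n$ converges in $L^p(\mu)$ and grows faster than any polynomial off a $\mu$-null set; a careful choice (adjusting on the null set, and taking a lower semi-continuous regularization) yields a weight $W$ with $W^{-p} \in L^1(\mu)$ in a controlled way. Then set $d\nu = W^p\, d\mu$; one must check $\nu$ is a finite positive measure, which is exactly the statement that $W^p \in L^1(\mu)$, so $W$ must be chosen with this additional constraint — this is the delicate point of the construction. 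Finally, to see that polynomials are dense in $C_W$, I would argue by duality: a $W$-finite measure $\sigma$ annihilating all polynomials gives, via $d\sigma = h\, d(W^{-p}\nu)$-type representations or directly by Hölder with conjugate exponent, an element of the annihilator of polynomials in $L^p(\mu)$ (when $p \geqslant 1$; for $0 < p < 1$ one replaces duality by the separation/Hahn--Banach-free argument available in $L^p$, or reduces to the $p=1$ case after raising to a power, as Bakan does), forcing $\sigma = 0$.

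\textbf{The ``if'' direction and the main obstacle.} Conversely, assume $\mu = W^{-p}\nu$ with polynomials dense in $C_W$. Given $f \in L^p(\mu)$ and $\e > 0$, I want a polynomial $P$ with $\int |f - P|^p\, d\mu < \e$, i.e. $\int |f - P|^p W^{-p}\, d\nu < \e$. The idea is to first approximate $f$ in $L^p(\mu)$ by a function $g$ that is continuous and satisfies $g W^{-1} \to 0$ at $\pm\infty$ (so $g \in C_W$); such $g$ exist because continuous compactly supported functions are dense in $L^p(\nu)$ and the weight $W^{-1}$ is bounded. Then apply density of polynomials in $C_W$ to approximate $g$ uniformly in $\|\cdot\|_W$, and observe that $\|g - P\|_W < \eta$ gives $|g - P| < \eta W$ pointwise, hence $\int |g-P|^p\, d\mu = \int |g-P|^p W^{-p}\, d\nu \leqslant \eta^p \nu(\R)$, which is small. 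Combining the two approximations finishes the proof. The main obstacle is the ``only if'' direction: constructing the weight $W$ so that simultaneously (i) $W$ is a genuine weight (lower semi-continuous, $\geqslant 1$, super-polynomial growth), (ii) $W^p d\mu^{-1}$... more precisely $W^p \in L^1(\mu)$ so that $\nu$ is finite, and (iii) density in $C_W$ holds — item (iii) does not come for free and requires transporting the density hypothesis in $L^p(\mu)$ back through the construction, which is where the interplay of the two dualities (and the handling of $0 < p < 1$) must be done with care.
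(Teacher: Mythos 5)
Your ``if'' direction is fine and is essentially the paper's argument (reduce to bounded continuous functions, then use $\int|g-s|^p\,d\mu=\int|g/W-s/W|^p\,d\nu$). The genuine gap is in the ``only if'' direction, and it is exactly at the point you flag but do not resolve. First, the duality step does not work: a $W$-finite measure $\sigma$ annihilating polynomials is an arbitrary measure on $\R$ with $\int W\,d|\sigma|<\infty$; it need not be absolutely continuous with respect to $\mu$, nor supported on $\supp\mu$, so density of polynomials in $L^p(\mu)$ gives no way to conclude $\sigma=0$ by H\"older or by a representation $d\sigma=h\,d(W^{-p}\nu)$. Second, and more decisively, a weight of the form $W(x)=\sum_n c_n|x|^n$ chosen only so that $W\in L^p(\mu)$ will in general \emph{never} have polynomials dense in $C_W$: for instance $c_n=e^{-n^2}$ gives $\log W(x)\asymp(\log|x|)^2$, which is Poisson-summable, so by Hall's theorem (quoted in section \ref{exandcor}) polynomials fail to be dense in $C_W$ regardless of any hypothesis on $\mu$. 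So the density in $C_W$ cannot be ``transported back'' for a weight built independently of the approximation data; the construction of $W$ must encode the hypothesis that polynomials are dense in $L^p(\mu)$.

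The paper's proof does precisely this and is worth comparing with your sketch: fix a countable family $\{f_n\}$ of bounded continuous (e.g.\ compactly supported) functions that is dense in $C_W$ for \emph{every} weight $W$, choose polynomials $s_{n,k}$ with $\|f_n-s_{n,k}\|_{L^p(\mu)}<4^{-(n+k)}$, and set $W=1+\sum_{n,k}2^{n+k}|f_n-s_{n,k}|$. Then $W$ is lower semi-continuous, lies in $L^p(\mu)$ (so $\nu=W^p\mu$ is finite), grows super-polynomially because the $s_{n,k}$ have unbounded degrees, and by construction $|f_n-s_{n,k}|/W\leqslant 2^{-(n+k)}$, so $s_{n,k}\to f_n$ in the $C_W$ semi-norm; since the $f_n$ are dense in $C_W$, polynomials are dense in $C_W$ with no appeal to duality at all (which also makes the case $0<p<1$ painless). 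Your proposal is missing this idea of manufacturing $W$ from the $L^p(\mu)$ approximation errors themselves, and without it the ``only if'' direction does not go through.
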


\ms\no Let us point out that the weights appearing in the theorem are lower semi-continuous. Hence, to study the $L^p$- and uniform versions as one problem
 one needs the general definition of $C_W$ discussed above. For reader's convenience we supply a short proof of Bakan's result.


\begin{proof}
If polynomials are dense in $C_W$ for some weight $W$ such that $\mu=W^{-p}\nu$
then for any bounded continuous function $f$ there
exists a sequence of polynomials $\{s_n\}$
such that $s_n/W$ converges to $f/W$ uniformly. Then
$$\int |f-s_n|^p d\mu=\int \frac{|f-s_n|^p}{W^p}W^p d\mu=\int \left|\frac fW-\frac{s_n}W\right|^p d\nu\to 0.$$
Hence polynomials are dense in $L^p(\mu)$.

\ms\no Suppose that polynomials are dense in $L^p(\mu)$. Let $\{f_n\}_{n\in\N}$ be a  set
 of bounded continuous functions on $\R$, that is dense
 in any $C_W$ (one could, for instance, choose a countable set of compactly supported functions,
 dense in any space of continuous functions on a closed finite interval). Let $\{s_{n,k}\}_{n,k\in\N}$
be a family of polynomials such
that
$$||f_n-s_{n, k}||_{L^p(\mu)}<4^{-(n+k)}.$$
Define
$$W=1+\sum_{n,k\in \N}2^{n+k}|f_n-s_{n, k}|.$$
Notice that then  $W\in L^p(\mu)$, $W$ is lower semi-continuous and $s_{n, k}/W\to f_n/W$ uniformly as $k\to\infty$.
Without loss of generality, $L^p(\mu)$ is not finite dimensional. Then $\{s_{n,k}\}$ contains polynomials of arbitrarily
large degrees and $x^n=o(W)$ for any $n$. Thus $W$ is a weight.
Since $\{f_n\}$ is dense in $C_W$, polynomials are dense in $C_W$. The measure $\nu$ can be chosen as $W^p\mu$.

\end{proof}

\subsection{Main result}\label{mainresult}
In regard to the problem of uniform approximation,  we intend to prove the following result.
Recall that per our agreement all sequences  are assumed to be infinite.
The definition and formula for the characteristic sequence of a balanced sequence was given in section~\ref{char}.

\begin{theorem}\label{main}
Polynomials are not dense in $C_W$ if and only if there exists a balanced sequence $\L=\{\lan\}$
of zero density such that $\L$ and its characteristic sequence $P=\{p_n\}$ satisfy
\begin{equation}
\sum W(\lan)\exp (p_n)<\infty.
\label{condition}
\end{equation}
\end{theorem}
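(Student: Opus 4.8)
The plan is to reduce Bernstein's problem to a statement about the asymptotic decay of Cauchy integrals of annihilating measures, and then to extract the explicit condition \eqref{condition} by applying the de Branges-type existence theorem together with an exact computation of the point masses of an extreme annihilating measure via $A$-integrals. By the dual characterization noted after the Proposition, polynomials fail to be dense in $C_W$ precisely when there is a nonzero $W$-finite measure $\mu$ annihilating all polynomials. The first step is to show (this is the content of section~\ref{decay}) that such a $\mu$ can be taken with the property that its Cauchy integral $K\mu$ decays faster than any power of $z$ along the real line, and conversely that sufficiently fast polynomial decay of $K\mu$ forces $\mu\perp\PP$; the heuristic is that $z^n K\mu(z)\to 0$ at $\infty$ is a restatement of $\int t^n\,d\mu=0$. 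This lets us search among measures whose Cauchy integrals vanish to infinite order.

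The second step is to invoke the version of de Branges' theorem~66 proved in section~\ref{section66}: when polynomials are not dense, among all annihilating $W$-finite measures one can select an \emph{extreme} one, $\mu_0$, and extremality forces $\mu_0$ to be as singular and as ``rigid'' as possible. The standard mechanism (Clark theory, section~\ref{Clark}) is that an extreme annihilating measure is a pure point measure supported on a discrete sequence $\L=\{\lan\}$, arising as the Clark measure of an inner function $\Theta$, with $K\mu_0$ (or $\K\mu_0$) expressible through $\Theta$; the rigidity of the extreme point pins down $\Theta$ up to the combinatorial data of $\L$, and the decay of $K\mu_0$ forces $\L$ to have zero density and to be balanced (so that the relevant principal-value sums in section~\ref{char} converge). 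The $W$-finiteness $\int W\,d|\mu_0|<\infty$ then reads $\sum W(\lan)\,\mu_0(\{\lan\})<\infty$, so everything comes down to computing the point masses $\mu_0(\{\lan\})$.

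The third step — the genuinely new computational ingredient — is the evaluation of these point masses. Here I would use the Titchmarsh--Ulyanov theory of $A$-integrals (section~\ref{TUT}) to make sense of $K\mu_0$ on $\R$ itself even though $\mu_0$ is only Poisson-finite, and to recover $\mu_0(\{\lan\})$ as a residue-type limit of $(t-\lan)K\mu_0(t)$. Writing $\Theta$ as (essentially) a Blaschke-type product over $\L$ in the half-plane and transferring to the line via the Cayley transform, one computes $\log|\mu_0(\{\lan\})|$ as $-\log$ of a derivative, and the principal-value regularization forced by zero density produces exactly
\[
\mu_0(\{\lan\})\asymp \exp\!\left(-p_n\right),
\qquad
p_n=\frac12\left[\log(1+\l_n^2)+\sum_{k\neq n}\log\frac{1+\l_k^2}{(\l_k-\lan)^2}\right],
\]
the convergence of the series over $k$ being equivalent to balancedness. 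Substituting into $\sum W(\lan)\,\mu_0(\{\lan\})<\infty$ gives \eqref{condition}. This is essentially the argument of section~\ref{sectionmasses}, carrying over ideas from \cite{MIF,MIF2}.

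For the converse direction, given a balanced zero-density $\L$ with $\sum W(\lan)\exp(p_n)<\infty$, one runs the construction backwards: build the inner function $\Theta$ with the prescribed zero set, take $\mu_0$ to be the associated Clark measure with the above point masses, verify via the $A$-integral computation that $K\mu_0$ decays to infinite order (hence $\mu_0\perp\PP$) and that $\mu_0\neq 0$, and note $\int W\,d|\mu_0|=\sum W(\lan)\exp(p_n)<\infty$, so $\mu_0$ is a nonzero $W$-finite annihilator and polynomials are not dense. The main obstacle I expect is the third step: controlling the conditionally convergent principal-value products that define $\Theta$ and its point masses, justifying the passage from Poisson-finite Cauchy integrals to honest $A$-integrals on $\R$, and checking that the error terms hidden in the $\asymp$ above are genuinely bounded above and below uniformly in $n$ — this is where zero density (as opposed to merely finite density) is essential and where the bookkeeping is heaviest.
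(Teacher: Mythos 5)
Your outline follows the paper's own route (duality, the de Branges--type extreme measure of Lemma \ref{t66}, Clark theory, Titchmarsh--Ulyanov evaluation of the point masses, then a reverse construction), but as written it contains concrete errors. First, the mass formula has the wrong sign: Lemma \ref{masses} gives $\nu(\{\lan\})=\const\,(-1)^n e^{p_n}$, not $\asymp e^{-p_n}$; with your formula the substitution into $\int W\,d|\nu|<\infty$ yields $\sum W(\lan)e^{-p_n}<\infty$, which is not \eqref{condition}, so your forward direction as stated proves a different condition (with the paper's definition, $p_n$ is essentially $\log$ of the residue of $K\nu$ at $\lan$, i.e.\ $-\log$ of the derivative of the underlying entire function, so the mass is $e^{p_n}$). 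Second, the decay characterizing annihilation (Lemma \ref{growth}) is along $i\R_+$, not ``along the real line''. Third, the ingredient that makes the $A$-integral computation give the masses \emph{exactly} is condition 4 of Lemma \ref{t66}: $K\nu$ is zero free in $\C$ and outer in $\C_\pm$, so that $\log|K\nu|$ is the harmonic conjugate of the argument $\pi n_\L$; the word ``rigidity'' does not supply this, and without outerness one loses the lower bound on the masses.

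The converse direction has a genuine gap. A Clark measure is positive, and a nonzero positive measure cannot annihilate even the constant polynomial $1$, so ``take $\mu_0$ to be the associated Clark measure with the above point masses'' cannot work: the alternating signs are indispensable, and the correct candidate is the signed measure $\mu=\sum(-1)^n e^{p_n}\delta_{\lan}$. Moreover, ``verify via the $A$-integral computation that $K\mu_0$ decays to infinite order'' names no mechanism: Theorem \ref{TiUl} recovers $h$ from $\tilde h$ and is what computes the masses in the forward direction; it does not by itself produce super-polynomial decay of a Cauchy transform. The paper's argument at this point is different and is needed: one forms the principal-value product $F_\L(z)=\lim_{N\to\infty}(-1)^N\prod_{|n|\le N}\frac{\sqrt{1+\lan^2}}{z-\lan}$, whose convergence is exactly where balancedness and zero density enter, observes that $F_\L$ decays super-polynomially along $i\R$ and has simple poles at $\L$ with residues $\const(-1)^n e^{p_n}$, and then shows that $F_\L-\const K\mu$ is entire of zero exponential type and vanishes along $i\R$, hence is identically zero; Lemma \ref{growth} then gives $\mu\perp\P$, and since \eqref{condition} makes $\mu$ $W$-finite, duality concludes. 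Some identification step of this Liouville/Phragm\'en--Lindel\"of type (or an equivalent substitute) is missing from your sketch and must be supplied for the ``if'' direction to close.
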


\ms\no In the rest of this section let us discuss some reformulations of the above result.

\begin{remark}\label{x^n}
We will call a weight degenerate if the set $\{W<\infty\}$ is a discrete sequence.
All other weights will be called non-degenerate. Degenerate weights are used
in problems of weighted approximation at discrete sequences of points.

\ms\no Note that one can add or remove finitely many points from $\L$ to change $p_n$
by $C\log |\lan|$. It follows that
if one allows only non-degenerate weights in theorem \ref{main} then the condition \eqref{condition} can be simplified to
$$ \log W(\lan)\leqslant -p_n$$
for large enough $n$ or to
\begin{equation}
\log W(\lan)\leqslant -p_n+O(\log |\lan|),
\label{conditionlog}
\end{equation}
depending on the direction the result needs to be applied in.

\ms\no Hence, for any non-degenerate weight $W$ and $n\in\N$, polynomials are dense in $C_W$ if and only if they are dense
in $C_{(1+|x|^n)W}$. This statement can also be obtained from lemma \ref{growth}.
\end{remark}

\begin{remark}\label{even}
A case often treated in classical literature is the case of even weights. If $W$ is even, then polynomials are not dense in $C_W$
if and only if there exists an even sequence $\L$ like in theorem \ref{main}.
Indeed, in that case there exists an even $W$-finite measure $\mu$ annihilating polynomials.
The rest follows from the last statement of lemma \ref{t66} below.
\end{remark}

\ms\no We say that a measure is discrete if is supported on a discrete sequence.
An important case of the problem of density of polynomials in $L^p(\mu)$ is when $\mu$ is discrete.
In that situation theorem \ref{main} yields the following statement (e.g. theorem A in \cite{BS} discussed in
section \ref{sectionBS}).

\begin{corollary}\label{discrete}
Let $\mu$ be a finite discrete measure supported on $\L=\{\lan\}$ such that $L^1(\mu)$ contains polynomials.

\ms\no I) Polynomials are not dense in $L^p(\mu)$ for some $1<p<\infty$ if and only if there exists a balanced zero density subsequence $\G=\{\gamma_n\}\subset\L$  such that the characteristic sequence  $P=\{p_n\}$ of $\G$ satisfies
$$\sum_{\G}[\mu(\{\gamma_n\})]^{1-q}\exp{( q p_n)}<\infty,$$
where $\frac 1p +\frac 1q =1$.

\ms\no II)  Polynomials are not dense in $L^1(\mu)$  if and only if there exists a balanced zero density subsequence $\G=\{\gamma_n\}\subset\L$ such that the characteristic sequence $P=\{p_n\}$ of $\G$ satisfies
$$\exp{p_n}=O(\mu(\{\gamma_n\}))$$
as $|n|\to \infty$.

\ms\no III) Define the weight $W$ as $W(\lan)=1+[\mu(\{\l_n\})]^{-1}$ and $W(x)=\infty$ for all $x\not\in \L$. Polynomials are not dense in $C_W$  if and only if there exists a balanced zero density subsequence $\G\subset\L$ such that the characteristic sequence  $P=\{p_n\}$ of $\G$ satisfies
$$\sum_{\G} \frac{\exp{  p_n}}{\mu(\{\gamma_n\})}<\infty.$$
\end{corollary}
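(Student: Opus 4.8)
The plan is to read all three statements off Theorem~\ref{main} together with the description of extreme annihilating measures from sections~\ref{section66} and~\ref{sectionmasses}. Throughout, the hypothesis that $L^1(\mu)$ contains the polynomials means $\sum_\L|\lan|^k\mu(\{\lan\})<\infty$ for every $k$; in particular every $t^k$ lies in $L^p(\mu)$, and $\sum_\G|\gamma_n|^k\mu(\{\gamma_n\})<\infty$ for every $k$ and every $\G\subset\L$. Part~III is immediate from Theorem~\ref{main}: since $W=\infty$ off $\L$, a sequence $\G$ can satisfy \eqref{condition} only if $\G\subset\L$, and for $\gamma_n\in\L$ we have $[\mu(\{\gamma_n\})]^{-1}\le 1+[\mu(\{\gamma_n\})]^{-1}\le(1+\mu(\R))[\mu(\{\gamma_n\})]^{-1}$ because $\mu(\{\gamma_n\})\le\mu(\R)$, so $W(\gamma_n)\asymp[\mu(\{\gamma_n\})]^{-1}$ with absolute constants and \eqref{condition} is equivalent to $\sum_\G\exp(p_n)/\mu(\{\gamma_n\})<\infty$.

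For parts~I and~II I would first pass, via $L^p$--$L^q$ (respectively $L^1$--$L^\infty$) duality, from density of polynomials to the existence of an annihilating measure: polynomials fail to be dense in $L^p(\mu)$ precisely when there is a nonzero $g$ in the dual space with $\int t^kg\,d\mu=0$ for all $k$, i.e.\ when the discrete measure $\sigma=g\,d\mu$ on $\L$ is a nonzero annihilator of polynomials with $\sum_\L|\sigma(\{\lan\})|^q[\mu(\{\lan\})]^{1-q}=\|g\|_{L^q(\mu)}^q<\infty$ (part~I), respectively with $\sup_n|\sigma(\{\lan\})|/\mu(\{\lan\})=\|g\|_{L^\infty(\mu)}<\infty$ (part~II); here $\sigma$ is a finite measure since $g\in L^q(\mu)\subset L^1(\mu)$. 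The bridge to the characteristic sequence is the structure of extreme annihilating measures, which I will use in two forms: (a) every balanced zero-density $\G=\{\gamma_n\}$ with $\sum_\G|\gamma_n|^k\exp(p_n)<\infty$ for all $k$ carries a nonzero annihilating measure $\tau$ with $|\tau(\{\gamma_n\})|\asymp\exp(p_n)$; and (b) every nonzero annihilating measure $\sigma$ dominates, up to a multiplicative constant, such an extreme measure $\tau$ supported on some balanced zero-density $\G\subset\supp\sigma$. Both are contained in the version of de Branges' theorem~66 proved in section~\ref{section66} (lemma~\ref{t66}) combined with the $A$-integral computation of point masses in section~\ref{sectionmasses}.

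Granting (a) and (b), the ``if'' directions are short. Given $\G$ as in~I (resp.~II), Hölder's inequality with exponents $p,q$ yields $\sum_\G|\gamma_n|^k\exp(p_n)\le\bigl(\sum_\G|\gamma_n|^{kp}\mu(\{\gamma_n\})\bigr)^{1/p}\bigl(\sum_\G[\mu(\{\gamma_n\})]^{1-q}\exp(qp_n)\bigr)^{1/q}<\infty$ for every $k$ (resp.\ directly $\sum_\G|\gamma_n|^k\exp(p_n)=O\bigl(\sum_\G|\gamma_n|^k\mu(\{\gamma_n\})\bigr)<\infty$), so by~(a) there is an annihilating $\tau$ on $\G$ with $|\tau(\{\gamma_n\})|\asymp\exp(p_n)$; then $g:=d\tau/d\mu$, extended by $0$ off $\G$, satisfies $\|g\|_{L^q(\mu)}^q\asymp\sum_\G\exp(qp_n)[\mu(\{\gamma_n\})]^{1-q}<\infty$ (resp.\ $\|g\|_{L^\infty(\mu)}\asymp\sup_n\exp(p_n)/\mu(\{\gamma_n\})<\infty$), and it annihilates polynomials, so they are not dense. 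Conversely, starting from the annihilator $\sigma=g\,d\mu$ above, apply~(b) to obtain a dominated extreme annihilator $\tau$ on a balanced zero-density $\G\subset\L$; from $|\tau(\{\gamma_n\})|\lesssim|\sigma(\{\gamma_n\})|=|g(\gamma_n)|\mu(\{\gamma_n\})$ and $|\tau(\{\gamma_n\})|\asymp\exp(p_n)$ we get $\sum_\G[\mu(\{\gamma_n\})]^{1-q}\exp(qp_n)\lesssim\sum_\G|g(\gamma_n)|^q\mu(\{\gamma_n\})\le\|g\|_{L^q(\mu)}^q<\infty$ in part~I, and $\exp(p_n)\lesssim|g(\gamma_n)|\mu(\{\gamma_n\})\le\|g\|_{L^\infty(\mu)}\mu(\{\gamma_n\})$ in part~II.

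I expect the only real obstacle to be property~(b): the ``only if'' directions must produce a \emph{single} sequence $\G$, and this is precisely what the existence and rigid structure of extreme annihilating measures from sections~\ref{section66}--\ref{sectionmasses} provide. One can alternatively derive the ``if'' part of~I and~II from Theorem~\ref{main} via Bakan's Theorem~\ref{tBakan}: writing $\mu=W^{-p}\nu$ with $W=\infty$ off $\L$, $W(\lan)=\bigl(\nu(\{\lan\})/\mu(\{\lan\})\bigr)^{1/p}$ and $\sum_\L\nu(\{\lan\})<\infty$, condition~\eqref{condition} becomes $\sum_\G\nu(\{\gamma_n\})^{1/p}\bigl([\mu(\{\gamma_n\})]^{-1/p}\exp(p_n)\bigr)<\infty$, and Hölder makes this follow from $\sum_\G[\mu(\{\gamma_n\})]^{1-q}\exp(qp_n)<\infty$; but this route yields only a subsequence $\G_\nu$ depending on the representing measure $\nu$, which is why the machinery of the preceding sections is indispensable for the converse.
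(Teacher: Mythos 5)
Part III and the ``if'' halves of I and II in your proposal are sound and essentially match what the paper does (the paper proves the ``if'' half by contradiction through Bakan's theorem, you construct the annihilator directly from the construction in part II of the proof of theorem \ref{main}; either works). The genuine gap is your property (b), on which your entire ``only if'' direction rests. You assert that (b) -- every nonzero annihilating measure $\sigma$ pointwise dominates, up to a constant, an extreme annihilating measure $\tau$ supported on a balanced zero-density $\G\subset\supp\sigma$ -- is ``contained in'' lemma \ref{t66} combined with lemma \ref{masses}. It is not. Lemma \ref{t66} is a Krein--Milman argument inside the set $S=\{\nu:\ \int W\,d|\nu|\le 1,\ \supp\nu\subset\supp\sigma,\ \nu\perp\P\}$ for some weight $W$ making $\sigma$ $W$-finite; the extreme point $\tau$ it produces is tied to $\sigma$ only through the support inclusion and the integral bound $\int W\,d|\tau|\le 1$. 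Lemma \ref{masses} then fixes the masses of $\tau$ rigidly as $\const(-1)^n\exp(p_n)$, determined by $\supp\tau$ alone; nothing in these lemmas forces $\exp(p_n)\lesssim|\sigma(\{\gamma_n\})|$ at the points of $\supp\tau$, and that pointwise comparison is exactly what you need to conclude $\sum_\G\exp(qp_n)[\mu(\{\gamma_n\})]^{1-q}\le\const\|g\|^q_{L^q(\mu)}$ (resp.\ $\exp(p_n)=O(\mu(\{\gamma_n\}))$). Moreover, even setting up (b) through lemma \ref{t66} requires choosing a superpolynomial weight for which $\sigma=g\,d\mu$ is $W$-finite, which is already delicate in the $L^1$/$L^\infty$ case.

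The paper's converse avoids (b) altogether, and you should too. It works with the annihilator $f\mu$, $f\in L^q(\mu)$, itself: if $Kf\mu$ is zero-free off $\supp f\mu$ and outer in $\C_\pm$, then lemma \ref{masses} applies \emph{directly to $f\mu$}, so its masses are exactly $\const(-1)^n\exp(p_n)$ and the desired sum is finite simply because $f\in L^q(\mu)$ -- no third measure and no domination are involved. If $Kf\mu$ has a real zero or an inner factor $e^{2iaz}$, the paper removes it explicitly ($g=f/(z-a)$, or $h=e^{-iaz}f$ and then $f-\tfrac h2$, using \cite{Clark}), notes that the modified annihilator $g\mu$ is $W$-finite for the weight $W(\lan)=[\mu(\{\lan\})]^{-1/p}$, $W=\infty$ off $\L$ -- a weight built from $\mu$, not from the annihilator -- applies theorem \ref{main} to get \eqref{condition} for some $\G$, and finally converts $\sum_\G\exp(p_n)[\mu(\{\gamma_n\})]^{-1/p}<\infty$ into the $q$-sum by testing $\phi(\gamma_n)=\exp(p_n)/\mu(\{\gamma_n\})$ against arbitrary $h\in L^p(\mu)$, via $|h(\lan)|\le\|h\|_{L^p(\mu)}[\mu(\{\lan\})]^{-1/p}$ and duality, to conclude $\phi\in L^q(\mu)$. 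In other words, the comparison that replaces your (b) is made against $\mu$ through the weight, not against $\sigma$; unless you supply an independent proof of (b) (the paper neither states nor uses such a statement), your ``only if'' argument does not go through as written.
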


\ms\no We postpone the proofs until section \ref{proofs}.

\section{Lemmas and proofs}

\subsection{Basics of Clark theory}\label{Clark}

\ms\no By $H^2$ we denote the Hardy space in the upper half-plane
$\C_+$. We say that an inner function
$\theta(z)$ in $\C_+$ is  meromorphic if it allows a
meromorphic extension to the whole complex plane. The meromorphic
extension to the lower half-plane $\C_{-}$ is given by
$$\theta(z)=\frac{1}{\theta^{\#}(z)} $$
where $\theta^{\#}(z)=\bar\theta(\bar z)$.

\ms\no Each inner function $\theta(z)$ determines a model subspace
$$K_\theta=H^2\ominus \theta H^2$$ of the Hardy space
$H^2(\C_+)$. These subspaces play an important role in complex and
harmonic analysis, as well as in operator theory,
see~\cite{Ni2}.

\ms\no For each inner function $\theta(z)$ one can consider a positive harmonic
function
$$\Re \frac{1+\theta(z)}{1-\theta(z)}$$
 and, by the Herglotz
representation, a positive measure $\mu$ such that
\begin{equation} \label{for1} \Re
\frac{1+\theta(z)}{1-\theta(z)}=py+\frac{1}{\pi}\int{\frac{yd\mu
(t)}{(x-t)^2+y^2}}, \hspace{1cm} z=x+iy,\end{equation} for some $p
\geqslant 0$. The number $p$ can be viewed as a point mass at infinity.
The measure $\mu$ is Poisson-finite, singular and supported on the set where non-tangential limits
of $\theta$ are equal to $1$.
The measure
$\mu +p\delta_\infty$ on $\hat\R$
is called the Clark measure for $\theta(z)$.

\ms\no The Clark measure defined in \eqref{for1} is often denoted by $\mu_1$.
If $\alpha\in \C, |\alpha|=1$ then $\mu_\alpha$ is the measure defined by \eqref{for1}
with $\theta$ replaced by $\bar\alpha\theta$.

\ms\no Conversely, for
every positive singular Poisson-finite measure $\mu$  and a number $p \geqslant
0$, there exists an inner function $\theta(z)$ satisfying~\eqref{for1}.

\ms\no Every function $f \in K_\theta$ can be represented by the formula
\begin{equation} \label{for2} f(z)=\frac{1-\theta(z)}{2\pi
i}\left(p\int{f(t)\overline{(1-\theta(t))}dt}+\pi Kf\mu(z)\right).
\end{equation}
If the Clark measure does not have a point mass at infinity,
the formula is simplified to
\begin{equation}2 i f(z)=(1-\theta(z))Kf\mu(z).\label{for2a}\end{equation}
These formulas define a unitary operator from
$L^2(\mu)$ to $K_\theta$.
Similar formulas can be written for any $\mu_\alpha$ corresponding to $\theta$.
For any $\alpha,\ |\alpha|=1$ and any $f\in K_\theta$, $f$ has non-tangential boundary
values $\mu_\alpha$-a.e. on $\R$. Those boundary values can be used in \eqref{for2} or \eqref{for2a} to
recover $f$.

\ms\no In the case of meromorphic $\theta(z)$,
every function $f \in K_\theta$ also has a meromorphic extension in
$\C$ given by the formula~\eqref{for2}. The
corresponding Clark measure is discrete  with atoms at the points
of $\{\theta=1\}\subset \hat\R$ of the size
$$\mu(\{x\})=\frac{2\pi
}{|\theta'(x)|}.$$
If $\L\subset \R\ (\hat\R)$ is a given discrete sequence, one can easily construct a meromorphic
inner function $\theta$ satisfying $\{\theta=1\}=\L$ by  considering a positive
Poisson-finite measure concentrated on $\L$ and then choosing $\theta$ to satisfy
\eqref{for1}. One can prescribe the derivatives of $\theta$ at $\L$ with a proper choice
of pointmasses.

\ms\no For more details on Clark measures and further references the reader may consult \cite{PS}.

\subsection{Polynomial annihilation and asymptotic decay}\label{decay}
 Suppose that $L^1(|\mu|)$ contains all polynomials. We say that
$\mu$ annihilates polynomials (and occasionally write $\mu\perp\P$) if
$$\int x^nd\mu(x)=0$$
for all $n\in\Z,\ n\geqslant 0$.

\begin{lemma}\label{growth}
A measure $\mu$ with finite moments annihilates polynomials if and only if
$$K\mu(iy)=o(y^{-n})$$
for any $n>0$ as $y\to\infty$.
\end{lemma}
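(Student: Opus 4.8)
The plan is to expand the Cauchy integral $K\mu(iy)$ in a power series in $1/y$ for large $y$ and read off the moment conditions term by term. Since $\mu$ has finite moments of all orders and (being a candidate annihilator) is in particular finite, for $|t| < y$ we have the geometric expansion
\[
\frac{1}{t-iy} = \frac{-1}{iy}\cdot\frac{1}{1-t/(iy)} = -\frac{1}{iy}\sum_{k\ge 0}\left(\frac{t}{iy}\right)^k.
\]
Formally integrating against $d\mu$ suggests
\[
\pi K\mu(iy) = \int\frac{d\mu(t)}{t-iy} = -\sum_{k\ge 0}\frac{1}{(iy)^{k+1}}\int t^k\,d\mu(t) = -\sum_{k\ge 0}\frac{m_k}{(iy)^{k+1}},
\]
where $m_k = \int t^k\,d\mu$. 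The honest version of this requires care because the series need not converge and because $|t|<y$ fails on part of the line, so I would instead argue with a finite Taylor expansion with remainder, as sketched below.

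\textbf{Necessity.} Suppose $\mu \perp \P$, so $m_k = 0$ for all $k \geqslant 0$. Fix $n$. Write, for each fixed $t$ and $y$,
\[
\frac{1}{t-iy} = -\sum_{k=0}^{n-1}\frac{t^k}{(iy)^{k+1}} + \frac{t^n}{(iy)^n}\cdot\frac{1}{t-iy}.
\]
This is an exact identity (multiply through by $t-iy$ to check it). Integrating against $d\mu$ and using $m_0 = \dots = m_{n-1} = 0$ kills the polynomial part, leaving
\[
\pi K\mu(iy) = \frac{1}{(iy)^n}\int\frac{t^n}{t-iy}\,d\mu(t).
\]
Now $|t^n/(t-iy)| \leqslant |t^n|/y$, so the integral is bounded by $y^{-1}\int|t|^n\,d|\mu|(t) = C_n y^{-1}$; hence $K\mu(iy) = O(y^{-n-1}) = o(y^{-n})$. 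Since $n$ was arbitrary, this gives the claimed decay. (One can even get the cleaner bound $o(y^{-n})$ directly by a dominated-convergence argument: $(iy)^n K\mu(iy) = \pi^{-1}(iy)^{-1}\int t^n(t-iy)^{-1}d\mu \to 0$ because the integrand tends to $0$ pointwise and is dominated by $|t|^n \in L^1(|\mu|)$.)

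\textbf{Sufficiency.} Conversely, suppose $K\mu(iy) = o(y^{-n})$ for every $n > 0$; we must show all moments vanish. I would induct on $k$ to show $m_k = 0$. Suppose $m_0 = \dots = m_{k-1} = 0$ (vacuous for $k=0$). By exactly the identity used above,
\[
\pi (iy)^{k+1} K\mu(iy) = -m_k + (iy)^{k+1}\cdot\frac{1}{(iy)^{k+1}}\int\frac{t^{k+1}}{t-iy}\,d\mu(t) = -m_k + \int\frac{t^{k+1}}{t-iy}\,d\mu(t).
\]
Wait — more carefully: from $\frac{1}{t-iy} = -\sum_{j=0}^{k}\frac{t^j}{(iy)^{j+1}} + \frac{t^{k+1}}{(iy)^{k+1}(t-iy)}$ and the inductive hypothesis, $\pi K\mu(iy) = -\frac{m_k}{(iy)^{k+1}} + \frac{1}{(iy)^{k+1}}\int\frac{t^{k+1}}{t-iy}d\mu(t)$, so
\[
\pi(iy)^{k+1}K\mu(iy) = -m_k + \int\frac{t^{k+1}}{t-iy}\,d\mu(t).
\]
The left side tends to $0$ as $y\to\infty$ because $K\mu(iy) = o(y^{-k-1})$. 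The integral on the right tends to $0$ by dominated convergence (integrand $\to 0$ pointwise, dominated by $|t|^{k+1}\in L^1(|\mu|)$ via $|t^{k+1}/(t-iy)|\le |t|^{k+1}/y \le |t|^{k+1}$ for $y \ge 1$). Hence $m_k = 0$, completing the induction, so $\mu \perp \P$.

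\textbf{Main obstacle.} There is no serious obstacle; the only point requiring attention is justifying the interchange of limit and integral, which is handled by dominated convergence using the hypothesis that $|t|^n \in L^1(|\mu|)$ for all $n$ — that is exactly what "$\mu$ has finite moments" provides. The algebraic identity for the finite geometric-series remainder is what lets us avoid any convergence issue with the full power series. If one wanted the asymptotic expansion $\pi K\mu(iy) \sim -\sum_k m_k (iy)^{-k-1}$ as a genuine statement, the same remainder estimate gives it as an asymptotic (not necessarily convergent) series, but for the lemma only the two implications above are needed.
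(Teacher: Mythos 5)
Your proof is correct, and the two implications are handled somewhat differently from the paper. For necessity your argument is essentially the paper's in different clothing: the exact identity $\frac{1}{t-iy}=-\sum_{k=0}^{n-1}\frac{t^k}{(iy)^{k+1}}+\frac{t^n}{(iy)^n(t-iy)}$ is just the statement that $(t^n-z^n)/(t-z)$ is a polynomial in $t$, which is what the paper integrates against $\mu$; both give $z^nK\mu(z)=K(t^n\mu)(z)$ and then the decay. For sufficiency, however, you take a genuinely different and more elementary route. The paper fixes $n$, forms the entire function $H(z)=\int\frac{t^n-z^n}{t-z}\,d\mu(t)$, observes it has exponential type zero and is bounded on $i\R$ (which requires first reducing to real $\mu$ so that $K\mu(-iy)=\overline{K\mu(iy)}$ gives decay on the lower imaginary axis too), kills it by Phragm\'en--Lindel\"of, and then evaluates at $z=0$ to extract the vanishing of one moment. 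You instead induct on $k$, multiply the remainder identity by $(iy)^{k+1}$, and let $y\to\infty$, using dominated convergence (with dominant $|t|^{k+1}\in L^1(|\mu|)$, exactly the finite-moments hypothesis) to isolate $m_k=0$. Your version needs no complex-analytic machinery, no symmetrization to real measures, and uses the decay hypothesis only along the positive imaginary half-axis; what the paper's route buys is consistency with the entire-function framework it exploits elsewhere (the same $H$-type argument recurs in the proof of the main theorem and in the Hamburger/Krein discussion), but for this lemma in isolation your argument is leaner and fully rigorous, with the limit/integral interchange correctly justified.
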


\begin{proof} Suppose that $\mu\perp\P$. Since
$(t^n-z^n)/(t-z)$ is a polynomial of $t$ for every fixed $z$,
$$0=\int\frac{t^n-z^n}{t-z}d\mu(t)=[Kt^n\mu](z)-z^nK\mu(z).$$
Since any Cauchy integral of a finite measure tends to zero along $i\R_+$, so does $Kt^n\mu$. Hence $K\mu(z)=o(z^{-n})$
as $z\to\infty,\ z\in i\R_+$.

\ms\no Conversely, suppose that $K\mu(iy)=o(y^{-n})$
for any $n>0$ as $y\to\infty$.
Without loss of generality, $\mu$ is real (otherwise consider $\mu-\bar\mu$ or $i(\mu+\bar\mu)$).
Then $$K\mu(-iy)=\overline{K\mu(iy)}=o(y^{-n})$$ as well.
Since $\mu$ has finite moments we may consider the function
$$H(z)=\int\frac{t^n-z^n}{t-z}d\mu(t).$$
It is easy to show that $H$ is entire of exponential type zero.
Noticing again that
$$H(z)=[Kt^n\mu](z)-z^nK\mu(z),$$
we see that $H$ is bounded on $i\R$. Hence $H$ is a constant by the Phragmen-Lindell\"of principle. Since $H(iy)$ tends to zero, $H$ is zero. Therefore
$$z^nK\mu(z)=[Kt^n\mu](z)=\int\frac{t^n}{t-z}d\mu(t).$$
Putting $z=0$ in the last equation we get that $\mu$ annihilates $t^{n-1}$ for any $n>0$.
\end{proof}

\subsection{A version of de Branges' theorem 66.}\label{section66}
An important tool in the study of completeness problems is a theorem by de Branges that reduces
the problem to a question of existence of an entire function with certain extremal properties.
A version of this theorem applicable to polynomial approximation can be found in \cite{dBr1}.
Another version, pertaining to exponential approximation, is theorem 66 in \cite{dBr}.
Further variations of this result, along with a detailed discussion of applications can be found
in \cite{BS, S, SY}.

\ms\no The theorem can be equivalently reformulated in terms of existence of extremal measures with certain
properties of Cauchy integrals. Statements of that kind were formulated in \cite{GAP, Type}.
In this section we discuss yet another version of that result applicable in our settings.

\begin{lemma}\label{t66} Let $W$ be a weight and let $\mu\neq 0$ be a $W$-finite complex measure on $\R$ that annihilates polynomials.
Then there exists a real finite discrete measure $\nu=\sum \alpha_n\delta_{\lan}$ such that

\ms\no 1) $\supp\nu=\{\lan\}\subset\supp\mu$,

\ms\no 2) $\nu$ is $W$-finite,

\ms\no 3) $\nu\perp\P$,

\ms\no 4) $K\nu\neq 0$ anywhere in $\C$ and is outer in $\C_\pm$.

\ms\no If $\mu$ is even, $\nu$ can be chosen to be even.

\end{lemma}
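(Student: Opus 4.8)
The plan is to run de Branges' extreme‑point scheme and then read off properties 1)--4) from extremality.

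\textbf{Reduction and choice of $\nu$.} One may assume $\mu$ is real: since $\int t^n\,d\bar\mu=\overline{\int t^n\,d\mu}=0$, both $\operatorname{Re}\mu$ and $\operatorname{Im}\mu$ are $W$-finite, supported in $\supp\mu$ and annihilate $\P$, and at least one of them is nonzero. Put $E=\supp\mu$ and
$$\XX=\Bigl\{\sigma:\ \sigma\ \text{real},\ \supp\sigma\subseteq E,\ \sigma\perp\P,\ \int W\,d|\sigma|\leqslant 1\Bigr\}.$$
By the description of $(C_W)^*$ in Section~\ref{C_W} and the fact that the three constraints defining $\XX$ are weak-$*$ closed, $\XX$ is convex and weak-$*$ compact; it is nonzero because $\mu/\int W\,d|\mu|\in\XX$. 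By the Krein--Milman theorem, $\XX$ has a nonzero extreme point $\nu$; since a finite measure with bounded support that annihilates $\P$ must vanish (Weierstrass), $\supp\nu$ is unbounded. For the even version of the lemma, replace $\XX$ throughout by the still convex, weak-$*$ compact and nonzero subset of even measures, and all multipliers below by their even parts.

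\textbf{Extremality criterion.} A routine total-variation computation shows that $\nu$ is extreme in $\XX$ if and only if $\int W\,d|\nu|=1$ and the only real $h\in L^\infty(|\nu|)$ with $\|h\|_{L^\infty(|\nu|)}\leqslant 1$ and $h\nu\perp\P$ are the constants. Indeed, given a nonconstant such $h$, the function $h'=\e\bigl(h-\int W h\,d|\nu|\bigr)$ has $\int Wh'\,d|\nu|=0$, $|h'|\leqslant1$ and $h'\nu\perp\P$ for small $\e>0$, so $\nu=\tfrac12\bigl((1+h')\nu+(1-h')\nu\bigr)$ splits $\nu$ nontrivially inside $\XX$; conversely, if $\nu=\tfrac12(\sigma_1+\sigma_2)$ with $\sigma_i\in\XX$, then equality in $1=\int W\,d|\nu|\leqslant\tfrac12\int W\,d(|\sigma_1|+|\sigma_2|)\leqslant1$ forces $\sigma_i=g_i\nu$ with $g_i\geqslant0$, $g_1+g_2=2$, so $h=g_1-1$ is admissible, hence constant, hence $\sigma_1=\sigma_2=\nu$.

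\textbf{$K\nu$ is zero-free.} If $K\nu(w)=0$ for some $w\in\C_+$, then also $K\nu(\bar w)=\overline{K\nu(w)}=0$, and partial fractions give
$$K\Bigl(\tfrac{\nu}{t-w}\Bigr)(z)=\frac{K\nu(z)-K\nu(w)}{z-w}=\frac{K\nu(z)}{z-w},$$
so $K(\tfrac{\nu}{t-w})(iy)=o(y^{-n})$ for every $n$ and Lemma~\ref{growth} yields $\tfrac{\nu}{t-w}\perp\P$; similarly $\tfrac{\nu}{t-\bar w}\perp\P$. Then $h(t)=\operatorname{Re}\tfrac1{t-w}=\tfrac{t-\operatorname{Re}w}{|t-w|^2}$ is a bounded, real, nonconstant multiplier with $h\nu\perp\P$, contradicting extremality after rescaling to norm $\leqslant1$. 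The identity $K\nu(\bar z)=\overline{K\nu(z)}$ rules out zeros in $\C_{-}$ as well, and $h=\tfrac1{t-x_0}$ rules out zeros at real $x_0\notin\supp\nu$.

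\textbf{The main obstacle: discreteness and outerness, and conclusion.} It remains to deduce from extremality that $\nu$ is carried by a discrete sequence and that $K\nu$ is outer in $\C_\pm$ --- this is de Branges' ``theorem 66'' in the present guise and is the one genuinely hard step. It is proved by the same kind of perturbation as above: were $|\nu|$ not purely atomic with discrete support, or were $K\nu$ to have a nonconstant inner factor in $\C_+$ (in particular an exponential factor), one would build from the offending structure a bounded $h$ with $h\nu\perp\P$ and $h$ not constant --- a suitably localized or modified piece of $\nu$ in the first case; a bounded analytic function coming from the inner factor, via its Clark family or by dividing $K\nu$ by an inner divisor and correcting with a Cauchy integral, in the second --- contradicting the extremality criterion; this follows the line of \cite{dBr1, dBr, GAP, Type, BS, S, SY}. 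Once $\nu=\sum\alpha_n\delta_{\lambda_n}$ is discrete, $K\nu$ is meromorphic on $\C$ with simple poles exactly on $\supp\nu$, and by the previous paragraph it is zero-free, so $1/K\nu$ is entire (of exponential type $0$, by the decay of $K\nu$ along $i\R$ from Lemma~\ref{growth} together with outerness). Since $\nu\in\XX$, we have $\supp\nu=\{\lambda_n\}\subseteq\supp\mu$, and $\nu$ is real, $W$-finite and annihilates $\P$; this is precisely 1)--4). The even version is obtained by running the whole argument inside the even members of $\XX$; the resulting $\nu$ is even, and $0$ is one of the $\lambda_n$, since otherwise the odd function $K\nu$ would vanish at the origin, contradicting zero-freeness.
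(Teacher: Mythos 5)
Your setup coincides with the paper's: the same convex, weak-$*$ compact set of real $W$-finite annihilating measures, a nonzero extreme point via Krein--Milman, and the extremality criterion that the only bounded real multipliers $h$ with $h\nu\perp\P$ are constants (your argument for this is essentially the paper's $\nu_1=h\nu$, $\nu_2=(1-\alpha)^{-1}(\nu-\alpha\nu_1)$ splitting). Your zero-freeness argument via the multiplier $\Re\frac1{t-w}$ and the identity $K\bigl(\frac{\nu}{t-w}\bigr)(z)=\frac{K\nu(z)-K\nu(w)}{z-w}$ together with Lemma~\ref{growth} is correct and somewhat more elementary than the paper's (which gets zeros in $\C_\pm$ for free from outerness and handles real zeros inside $K_J$).

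However, there is a genuine gap: the two properties that carry the whole weight of the lemma --- that the extreme measure $\nu$ is discrete and that $K\nu$ is outer in $\C_\pm$ --- are not proved but only asserted to follow ``by the same kind of perturbation,'' with a citation. For discreteness, the suggested perturbation (``a suitably localized or modified piece of $\nu$'') does not work as stated: multiplying $\nu$ by an indicator or otherwise localizing it destroys the global constraint $h\nu\perp\P$, so no admissible multiplier is produced this way. The paper's actual route is quite different and nontrivial: from one-dimensionality of the annihilator space one first deduces that polynomials are dense in the codimension-one subspace $\{g:\int g\,d|\nu|=0\}$ of $L^1(|\nu|)$; then for compactly supported continuous $g$ one shows that $f=Kg\nu/K\nu$ is \emph{entire}, which requires a real argument (a majorant $V$, Clark's unitary identification, convergence of $(1-I)KF_k\eta$ in $H^2$, and the Cauchy formula on large circles where $|(1-I)K\nu|$ is bounded below); entirety forces $\nu$ to be singular, and then the nontangential blow-up of $K\nu$ at $\nu$-a.e.\ point combined with discreteness of the zero set of $f$ yields discreteness of $\supp\nu$. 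None of these steps is in your sketch. Similarly, for outerness the contradiction measure is $\bar U I\nu=IH|\nu|$, where $I$ must be a \emph{proper} inner divisor of the inner factor $U$ chosen so that $IH(iy)$ still decays super-polynomially (so that Lemma~\ref{growth} applies after using $y^{-1}=O(1-J(iy))$); your phrase ``dividing $K\nu$ by an inner divisor and correcting with a Cauchy integral'' points in this direction but does not establish the existence of such a divisor, nor why the resulting perturbation contradicts the one-dimensionality of the annihilator space. Also note that discreteness is needed \emph{before} your concluding sentence that $K\nu$ is meromorphic with simple poles on $\supp\nu$; as written, that conclusion presupposes exactly what is missing. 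So the proposal reproduces the paper's framework and the easy consequences of extremality, but the core of de Branges-type argument --- discreteness and outerness --- remains unproven.
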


\begin{proof}
Without loss of generality
$$\int W(x)d|\mu|(x)=1.$$
We can also assume that the measure is real (otherwise consider $\mu\pm\bar\mu$).

\ms\no Denote by $S$ the following set of measures:
$$S=\{\ \nu\ |\ \int Wd|\nu|\leqslant 1, \ \supp\nu\subset\supp\mu,\ \nu\perp\P,\ \nu=\bar\nu\}.$$
Since $\mu\in S$, the set is non-empty. It is also convex and $*$-weakly closed
in the space of all $W$-finite measures.
Therefore by the Krein-Milman theorem it has a non-zero extreme point. Let $\nu$ be such a point.

\ms\no First, let us show that
the set of real
$L^\infty(|\nu|)$-functions $h$, such that $ h\nu\perp\P$, is one-dimensional and  therefore $h=c\in \R$.
(This is equivalent to the statement that the closure of polynomials in $L^1(|\nu|)$ has deficiency 1, i.e. the space
of its annihilators is one dimensional.)

\ms\no Let there be a bounded real $h$ such that $h\nu\perp\P$.
Without loss of generality $h\geqslant 0$, since one can add constants, and $\int Whd|\nu|=1$. Choose
$0<\alpha<1$ so that $0\leqslant\alpha h<1$. Consider the measures
$$\nu_1=h\nu\quad\textrm{ and }\quad\nu_2=(1-\alpha)^{-1}(\nu-\alpha\nu_1).$$ Then both of them
belong to $S$ and $\nu=\alpha \nu_1+(1-\alpha)\nu_2$, which contradicts
the extremity of $\nu$.

\ms\no Now let us show that $\nu$ is discrete. Let $g$ be a continuous compactly supported real function on $\R$
such that $\int gd|\nu|=0$. By the previous part, there exists a sequence  of polynomials $f_n$,
$f_n\to g$ in $L^1(|\nu|)$. Indeed, otherwise there would exist a function $h\in L^\infty(|\nu|)$ annihilating all polynomials  and such that $\int hgd|\nu|=1$. Since $\int gd|\nu|=0$, $h\neq const$ and we would obtain a contradiction
with the property that the space of annihilators is one-dimensional.

\ms\no Since $\nu$ annihilates polynomials and $(f_n(z)-f_n(w))/(z-w)$ is a polynomial for every fixed $w\in\C\sm\R$,
$$0=\int \frac{f_n(z)-f_n(w)}{z-w}d\nu(z)=Kf_n\nu(w)- f_n(w)K\nu(w)$$
and therefore
$$f_n(w)=\frac{Kf_n\nu}{K\nu}(w).$$
Taking the limit,
$$f=\lim f_n=\lim \frac{Kf_n\nu}{K\nu}=\frac{K g\nu}{K\nu}.$$

\ms\no Since all of $f_n$ are polynomials, one can show that the limit function $f$
is entire. Indeed, first notice that there exists a positive function $V\in L^1(|\nu|)$
such that $f_{n_k}/V \to g/V$ in $L^\infty(|\nu|)$, for some subsequence $\{f_{n_k}\}$. To find such a $V$ first choose
$f_{n_k}$ so that
$$||f_{n_k}-g||_{L^1(|\nu|)}<3^{-k}$$
 and then put
$$V=1+\sum 2^k |f_{n_k}-g|.$$
Denote $F_k=f_{n_k}/V$ and $\eta=V|\nu|$. Then $F_k$ converge in $L^2(\eta)$ and by the Clark theorem
$(1-I)KF_k\eta$ converge in $H^2(\C_+)$,
where $I$ is the inner function whose Clark measure is $\eta$. Notice that
$$f_{n_k}=\frac{Kf_{n_k}\nu}{K\nu}=\frac{KF_k\eta}{K\nu}=\frac{(1-I)KF_k\eta}{(1-I)K\nu}.$$
Now let $T$ be a large circle in $\C$ such that $|(1-I)K\nu|>\const>0$ on $T$.
Denote $T_\pm=T\cap \C_\pm$ and let $m_T$ be the Lebesgue measure on $T$.
Since
$(1-I)KF_k\eta$ converge in $H^2(\C_+)$, $f_{n_k}$ converge in $L^1(T_+, m_T)$. Similarly,
$f_{n_k}$ converge in $L^1(T_-, m_T)$. By the Cauchy formula it follows that $f_{n_k}$ converge normally
inside $T$ and therefore $f$ is analytic inside $T$. Since such a circle $T$ can be chosen
to surround any bounded subset of $\C$, $f$ is entire.

\ms\no Since the numerator in the representation
$$f=\frac{K g\nu}{K\nu}$$
is analytic outside the compact support of $g$, the measure in the denominator
must be singular outside of that support: Cauchy integrals of non-singular measures have jumps at the real line on the support of the a.c. part, which would contradict the property that $f$ is entire. Choosing two different functions $g$ with disjoint supports
we conclude that $\nu$ is singular.

\ms\no  Moreover, since $f$ is entire, the zero set of $f$ has to be discrete.
Since $\nu$ is singular, $K\nu$ tends to $\infty$ nontangentially in $\C_+$ at $\nu$-a.e. point and $f=0$ at $\nu$-a.e. point outside of the support of $g$.
Again, by choosing two different $g$ with disjoint supports, we can see that  $\nu$ is concentrated on a discrete set.

\ms\no Next, let us verify 4. Let $J$ be the inner function corresponding to $|\nu|$ ($|\nu|$ is the Clark measure for $J$). Denote
$$G=\frac 1{2 i}(1-J)K\nu\in K_J.$$
As was mentioned in section \ref{Clark}, $G$ has non-tangential boundary values $|\nu|$-a.e. and
$$\nu=G|\nu|.$$
Since $K\nu(iy)$ tends to $0$ sup-polynomially as $y\to\infty$ by lemma \ref{growth},  so does $G(iy)$.
Suppose that  $G=UH$ in $\C_+$ for some inner $U$. Then there exists a proper inner divisor $I$ of $U$ such that
$IH(iy)$ still decays sup-polynomially as $y\to\infty$. Since $IH\in K_J$,
$$IH=\frac 1{2 i}(1-J)K(IH|\nu|).$$
Since $y^{-1}=O(1-J(iy))$ as $y\to\infty$,  $K(IH|\nu|)$ decays sup-polynomially on the upper imaginary half-axis.
Hence by lemma \ref{growth}, the measure $IH|\nu|=\bar U I\nu$ annihilates polynomials, which again contradicts
the property that the space of annihilators is one-dimensional.
Therefore, $K\nu$ is outer in $C_+$. Since $\nu$ is real, $K\nu(\bar z)=\overline{K\nu(z)}$ and $K\nu$ is outer in $C_-$.

\ms\no If $G$ has a zero at $x=a\in \R$ outside of $\supp \nu$ then
$$\frac G{x-a}\in K_J$$
 and the measure
$$\gamma= \frac G{x-a}|\nu|$$
leads to a similar contradiction with the property that the space of annihilators is one-dimensional, since $(x-a)^{-1}$ is bounded and real on the support of $\nu$. Since
$G=\frac 1{2\pi i}(1-J)K\nu,$
 $K\nu$ does not have any  zeros on $\R$.

\ms\no The last statement of the lemma can be proved by choosing the set $S$ above to consist of even measures and repeating the steps.
\end{proof}

\begin{corollary}
Let $W$ be a weight such that polynomials are not dense in $C_W$. Then there exists a discrete measure $\nu$ satisfying
the conditions 1-4 of the last lemma. If $W$ is even, $\nu$ can be chosen to be even.
\end{corollary}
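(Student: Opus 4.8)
The plan is to combine the dual description of non-density recorded in Section~\ref{C_W} with Lemma~\ref{t66}.

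First I would invoke that duality: polynomials fail to be dense in $C_W$ precisely when there is a non-zero $W$-finite complex measure $\mu$ on $\R$ with $\mu\perp\P$. The only hypothesis of Lemma~\ref{t66} needing comment is that $\mu$ has finite moments, and this is automatic: since $W\geqslant 1$ and $x^n=o(W)$, for every $n$ there is a constant $C_n$ with $|x|^n\leqslant C_nW(x)$ on all of $\R$, whence $\int|x|^n\,d|\mu|\leqslant C_n\int W\,d|\mu|<\infty$, so $L^1(|\mu|)$ contains $\P$ and $\mu$ annihilates polynomials in the sense of Section~\ref{decay}. Applying Lemma~\ref{t66} to $\mu$ then produces a real finite \emph{discrete} measure $\nu$ with $\supp\nu\subset\supp\mu$, $\nu$ $W$-finite, $\nu\perp\P$, and $K\nu$ nowhere vanishing in $\C$ and outer in $\C_\pm$; this is exactly properties 1--4, which proves the first assertion.

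For the even case I would instead feed an \emph{even} annihilator into Lemma~\ref{t66}, since its last sentence then forces $\nu$ to be even. Split $\mu=\mu_e+\mu_o$ into even and odd parts, $\mu_e(E)=\tfrac12\bigl(\mu(E)+\mu(-E)\bigr)$. Because $W$ is even, $\mu_e$ and $\mu_o$ are again $W$-finite, and, their moments being scalar multiples of those of $\mu$ (which all vanish), we have $\mu_e\perp\P$ and $\mu_o\perp\P$. If $\mu_e\neq 0$ we use $\mu_e$. Otherwise $\mu$ is odd; then I would first pass to the weight $(1+|x|)W$, which is still even and has the same polynomial-density status as $W$ (Remark~\ref{x^n}), so that the annihilator may be taken $(1+|x|)W$-finite, and then $x\mu$ is a non-zero \emph{even} measure which still annihilates polynomials and is $W$-finite because $\int W|x|\,d|\mu|\leqslant\int(1+|x|)W\,d|\mu|<\infty$. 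In either case we get a non-zero even $W$-finite annihilator, and Lemma~\ref{t66} applied to it yields an even $\nu$.

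The genuinely delicate point is this last step --- guaranteeing that, for an even weight, the failure of density is detected by an \emph{even} measure rather than only by odd ones. Enlarging the weight by a factor comparable to $1+|x|$ is exactly what makes the multiplication by $x$ harmless. (When $W$ is degenerate, i.e.\ $\{W<\infty\}$ is a discrete sequence, Remark~\ref{x^n} does not apply verbatim and one instead argues directly on that symmetric discrete support; everything else is immediate from the results already established.)
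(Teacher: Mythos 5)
Your first paragraph reproduces the paper's own argument exactly: duality gives a non-zero $W$-finite annihilator, its moments are finite because $|x|^n\leqslant C_nW(x)$, and Lemma \ref{t66} then delivers a discrete $\nu$ with properties 1--4. That part is correct and is precisely the paper's proof.

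The divergence is in the even case, and there your instinct is sound but your execution leaves a real gap. The paper simply replaces $\mu$ by $\mu(x)+\mu(-x)$ and asserts this is non-zero ``because otherwise $\mu$ cannot annihilate monomials with odd powers''; you are right to be suspicious of that one-liner (an odd measure can annihilate all polynomials --- the odd part of any annihilator is one), so the possible vanishing of the even part does need an argument. Your repair --- upgrade the weight to $(1+|x|)W$ via Remark \ref{x^n} and, if $\mu$ is odd, pass to the even annihilator $x\mu$ --- is legitimate for non-degenerate weights (and is not circular, since Remark \ref{x^n} rests on Theorem \ref{main} and Lemma \ref{growth}, not on this corollary). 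But the degenerate case is then dismissed with ``one instead argues directly on that symmetric discrete support; everything else is immediate'', and it is not immediate: Remark \ref{x^n} has no stated analogue there, and manufacturing a $(1+|x|)W$-finite annihilator from a $W$-finite one on a discrete set requires an actual construction (for instance $\nu=\frac{\mu}{x-a}-\pi K\mu(a)\delta_a$, which needs a point $a\notin\supp\mu$ with $W(a)<\infty$, plus Lemma \ref{growth} to check annihilation). Worse, the regime you wave away is exactly where evenness is delicate: if $0\notin\{W<\infty\}$, then any even $W$-finite discrete $\nu$ has $0\notin\supp\nu$, its Cauchy transform is an odd function, hence $K\nu(0)=0$ and condition 4 fails outright; equivalently, by Lemma \ref{masses} the masses of any measure satisfying 3--4 alternate in sign along the naturally ordered support, which is incompatible with evenness unless $0\in\supp\nu$. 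So ``arguing directly on the symmetric support'' cannot be routine --- the argument must either force $0$ into the support (hence $W(0)<\infty$) or confront the fact that no even measure with property 4 exists there. As written, the last paragraph of your proof is a claim, not a proof; to match the paper you should either adopt its (terse) even-part argument or genuinely carry out the degenerate case.
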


\begin{proof} The statement follows from the lemma via duality. The space dual to $C_W$ is the space of $W$-finite measures $\mu$.
If polynomials are not dense in $C_W$ then the dual space contains a measure $\mu$ annihilating
polynomials and the lemma can be applied. If $W$ is even, $\mu(x)+\mu(-x)$ can be taken instead of $\mu$. Notice that
$\mu(x)+\mu(-x)\neq 0$ because otherwise $\mu$ cannot annihilate monomials with odd powers.
\end{proof}

\subsection{The Titchmarsh-Ulyanov theory of $A$-integrals}\label{TUT} Let $h\in L^1_{{\rm loc}}(\R)$ be a real-valued function. For each $A>0$ we denote
$$h^A=\begin{cases}h(x),\quad |h(x)|\le A,\\
A,\;\qquad h(x)> A, \\
-A,\;\qquad h(x)< -A.\end{cases}$$
The Cauchy $A$-integral of $h$ is defined by the formula
$$\K_{(A)}h(z)=\lim_{A\to\infty}~\K h^A(z),\qquad z\in \C_+,$$
provided that the limit exists for all $z$. Similarly, one may define the Poisson and the conjugate Poisson $A$-integrals $\PP_{(A)}h$ and $\QQ_{(A)}h$ respectively so that
$$\K_{(A)}h=i\PP_{(A)}h-\QQ_{(A)}h.$$

\ms\no
We denote by $\ti h$ the harmonic conjugate of $h$.
Recall that if $h,\ti h\in L^1(\Pi)$, then $\K\ti h=-i\K h+i\K h(i).$ The following well-known theorem allows one to recover $ h$ from $\ti h$
even when $\ti h\not\in L^1(\Pi)$.

\ms\begin{theorem}\label{TiUl} If $h\in L^1(\Pi)$, then the Cauchy $A$-integral of $\ti h$ exists,  and we have
\begin{equation}\label{Ai}\K_{(A)}\ti h(z)=-i\K h(z)+i\K h(i),\qquad z\in \C\setminus\supp \ti h.\end{equation}
\end{theorem}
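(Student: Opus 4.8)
The plan is to obtain \eqref{Ai} by approximating $h$ from inside by its truncations and tracking how the truncation interacts with harmonic conjugation. First I would fix $z\in\C_+$ and recall the basic identity that for any $g\in L^1(\Pi)$ one has $\K\tilde g(z)=-i\K g(z)+i\K g(i)$: this is the standard relation between the Cauchy integral of a function and that of its conjugate, valid because $i\K g+\K\tilde g$ is (up to the normalizing constant at $i$) analytic in $\C_+$ with vanishing imaginary part on $\R$, hence constant. The difficulty is that $\tilde h$ need not lie in $L^1(\Pi)$, so this identity cannot be applied to $h$ directly; the $A$-truncation is precisely the device that repairs integrability of the conjugate.

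The key steps, in order, are: (i) For each $A>0$, decompose $h=h^A+(h-h^A)$, where $h^A\in L^\infty\subset L^1(\Pi)$ and $r_A:=h-h^A$ is supported on $\{|h|>A\}$, a set of $\Pi$-measure tending to $0$ as $A\to\infty$ with $\int_{\R}|r_A|\,d\Pi\to 0$ by dominated convergence (since $h\in L^1(\Pi)$). (ii) Write $\widetilde{h}=\widetilde{h^A}+\widetilde{r_A}$; the point is that $\widetilde{h^A}$, being the conjugate of a bounded function, is in $L^1(\Pi)$ (indeed in every $L^p(\Pi)$, $p<\infty$, by the weak-type $(1,1)$ / Kolmogorov estimate, or directly since $h^A\in L^2(\Pi)$ gives $\widetilde{h^A}\in L^2(\Pi)$), so the elementary identity applies: $\K\widetilde{h^A}(z)=-i\K h^A(z)+i\K h^A(i)$. (iii) Now let $A\to\infty$. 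On the right-hand side, $\K h^A(z)\to\K h(z)$ and $\K h^A(i)\to\K h(i)$ because $|h^A|\le|h|\in L^1(\Pi)$ and $h^A\to h$ pointwise, so dominated convergence applies to the kernels $\frac1{t-z}-\frac{t}{1+t^2}$ and $\frac1{t-i}-\frac{t}{1+t^2}$, which are $O(1/(1+t^2))$. (iv) It remains to identify $\lim_A \K\widetilde{h^A}(z)$ with $\K_{(A)}\tilde h(z)$. Here one must compare $\K(\tilde h)^A$ — the truncation appearing in the definition of the $A$-integral — with $\K\widetilde{h^A}$, the Cauchy integral of the conjugate of the truncation. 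These differ by $\K\big[(\tilde h)^A-\widetilde{h^A}\big]$, and the crux is to show this tends to $0$; once that is done, the existence of $\K_{(A)}\tilde h(z)$ and formula \eqref{Ai} both follow, and the extension from $z\in\C_+$ to $z\in\C\setminus\supp\tilde h$ is by the reflection $\K_{(A)}\tilde h(\bar z)=\overline{\K_{(A)}\tilde h(z)}$ together with analyticity off the support.

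The main obstacle is step (iv): controlling $(\tilde h)^A-\widetilde{h^A}$. One cannot expect these to be literally equal — truncating and conjugating do not commute — so the argument must use that the conjugation operator is close to an isometry on the relevant scale and that $r_A\to 0$ in $L^1(\Pi)$. The natural route is: $\widetilde{h^A}=\tilde h-\widetilde{r_A}$, and $\widetilde{r_A}$ is small in measure (weak-type $(1,1)$: $\Pi\{|\widetilde{r_A}|>\lambda\}\lesssim \lambda^{-1}\|r_A\|_{L^1(\Pi)}\to 0$); on the other hand $\tilde h$ itself is finite $\Pi$-a.e., so on the large set where $|\tilde h|<A/2$ and $|\widetilde{r_A}|<A/2$ one has $(\tilde h)^A=\tilde h$ and $\widetilde{h^A}=\tilde h-\widetilde{r_A}$, making the difference equal to $\widetilde{r_A}$ there, hence small in $\Pi$-measure; on the small complementary set both terms are bounded by $A$ in absolute value, but one must still integrate against the kernel $\frac1{t-z}-\frac t{1+t^2}$ which is bounded on that fixed-$z$ setting, so the contribution is controlled by $A\cdot\Pi(\text{small set})$ — and here a subtlety appears, since $A\to\infty$ while the small set shrinks. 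Resolving this tension is the heart of the Titchmarsh–Ulyanov argument; one handles it by choosing the truncation level for $\tilde h$ to go to infinity more slowly than $A$, or equivalently by a careful two-parameter truncation, exploiting that $\Pi\{|h|>A\}=o(1/A)$ is not available in general but $\int_{|h|>A}|h|\,d\Pi=o(1)$ is, which is exactly the integrability that makes the weak-type bound on $\widetilde{r_A}$ strong enough. I would organize this as a lemma isolating the estimate $\K\big[(\tilde h)^A-\widetilde{h^A}\big](z)\to 0$ and then assemble (i)–(iv).
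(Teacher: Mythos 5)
The paper does not actually prove this statement: it quotes it as the classical Titchmarsh--Ulyanov theorem, referring to Zygmund \cite{Z} for Titchmarsh's part \eqref{tit} and to Aleksandrov \cite{A2} for Ulyanov's part \eqref{ul}, and the only argument it supplies is the extension of the convergence from $\C_\pm$ to real points outside $\supp\ti h$, done with the auxiliary kernel $C_\e(t)=\frac 1{t-x}-\frac 12[\frac 1{t-x-i\e}+\frac1{t-x+i\e}]$ and the extra hypothesis (available in the application) that $\ti h$ is monotone, hence $\ti h(t)=o(t)$. Your proposal, by contrast, attempts a full proof, and its steps (i)--(iii) are routine; the entire content of the theorem sits in your step (iv), which you explicitly leave as an unproved lemma (``resolving this tension is the heart of the Titchmarsh--Ulyanov argument''). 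That is a genuine gap, not a detail: the smallness of $\K\bigl[(\ti h)^A-\widetilde{h^A}\bigr](z)$ is exactly what Titchmarsh's and Ulyanov's proofs establish, and it does not follow from the tools you invoke. Concretely, on the main set where $(\ti h)^A=\ti h$ the difference equals $\widetilde{r_A}$, which is small in $\Pi$-measure by the weak-type $(1,1)$ bound but need not be small, or even finite, in $L^1(\Pi)$ (conjugation is unbounded on $L^1$), so smallness in measure alone cannot control its Cauchy integral against the kernel; one needs the genuine cancellation encoded in distribution-function asymptotics of the type $\lambda\,\Pi\{|\tilde g|>\lambda\}\to 0$ for $g\in L^1(\Pi)$, combined with a careful splitting of $h$. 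Moreover, your claim that on the exceptional set ``both terms are bounded by $A$'' is false for $\widetilde{h^A}$: only the truncation $(\ti h)^A$ is bounded by $A$ by definition, while the conjugate of the truncated function is not.

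A second, smaller gap: your extension from $z\in\C_+$ to all $z\in\C\setminus\supp\ti h$ ``by reflection together with analyticity off the support'' does not work as stated. Each truncated integral $\K\ti h^A$ is indeed analytic off $\supp\ti h$, but the classical theorem gives only pointwise convergence in $\C_\pm$, and pointwise convergence of analytic functions does not pass to boundary (real) points without some uniformity. This is precisely why the paper inserts the $C_\e$ argument, which exploits the decay $C_\e(t)=O(t^{-3})$ at infinity, the bound $O(\e^2)$ on $\supp\ti h$, and the growth restriction $\ti h(t)=o(t)$ coming from monotonicity; some substitute for that argument is required in your write-up as well. If your intent was to use the theorem rather than reprove it, the correct move (and the paper's) is to cite the classical results for $z\in\C_\pm$ and prove only this extension step.
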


\ms\no The imaginary part of the equation \eqref{Ai}, or rather its special case
\begin{equation}\label{tit}\PP_{(A)}\ti h(i)=0,\end{equation}is due to Titchmarsh, see \cite{Z}, and the real part of \eqref{Ai},
\begin{equation}\label{ul}\QQ_{(A)}\ti h=-\PP h+\PP h(i),\end{equation}is  Uly'anov's theorem, see \cite{A2} for a shorter proof.

\ms\no The classical results state convergence for all $z\in\C_\pm$, but can be extended to all $z$ outside of
$\supp\ti h$. We will apply theorem \ref{TiUl} in the special case when
 $\ti h(t)$ is monotonically increasing, and therefore $\ti h(t)=o(t)$. In that case such an extension can be obtained via the following simple argument. Let $x\in\R\setminus\supp\ti h$.
Let $\e>0$ and note that
$$C_\e(t)=\frac 1{t-x}-\frac 12\left[\frac 1{t-x-i\e}+\frac 1{t-x+i\e}\right]$$
decays like $t^{-3}$ as $t\to\pm\infty$ and bounded by $\const \times\e^2$ on $\supp\ti h$.
Since
$$\frac 1\pi\int C_\e (t) \ti h(t) dt=\K_{(A)}\ti h(x)-\frac 12\left[\K_{(A)}\ti h(x+i\e)+\K_{(A)}\ti h(x-i\e)\right],$$
and the last two $A$-integrals exist, $\K_{(A)}\ti h(x)$ exists. Tending $\e$  to zero and noticing that
$$\K_{(A)}\ti h(x+i0)+\K_{(A)}\ti h(x-i0)=2h(x),$$
we obtain the desired extension.

\subsection{Masses of extreme measures}\label{sectionmasses}
\ms\no In our settings, the statement on convergence of $A$-integrals becomes the property of existence of  characteristic sequences
for supports of extreme measures that appear in de Branges' theorem (lemma \ref{t66}).

\begin{lemma}\label{masses}
Let  $\nu=\sum \alpha_n\delta_{\lan}$ be a real finite discrete measure that satisfies the last two conditions
of the lemma \ref{t66}, i.e. such that

\ms\no 1) $\nu$ annihilates polynomials and

\ms\no 2) $K\nu\neq 0$ anywhere in $\C$ and is outer in $\C_\pm$.

\ms\no Then $\L=\{\lan\}$ is a balanced sequence of zero density  and
$$\alpha_n=\const (-1)^n\exp (p_n),$$
 where $P=\{p_n\}$ is the characteristic sequence of $\L$.
\end{lemma}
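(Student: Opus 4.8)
The plan is to write $K\nu$ as a product over the zeros dictated by the fact that it is real, zero-free, and outer in both half-planes, and then read off $\alpha_n$ from the residues of $K\nu$ at $\l_n$. Since $K\nu$ is outer and nonvanishing in $\C_\pm$ and real-symmetric ($K\nu(\bar z)=\overline{K\nu(z)}$), the function $1/K\nu$ is, in each half-plane, outer with no singular inner part; its only singularities in $\C$ are simple poles at the $\l_n$ (coming from the atoms of $\nu$), and at each such point $K\nu$ has residue $\alpha_n/\pi$. First I would use lemma \ref{growth}: $\nu\perp\P$ forces $K\nu(iy)=o(y^{-m})$ for all $m$, so $1/K\nu$ grows faster than any polynomial along $i\R_+$; combined with the Krein/Cartwright-type theory for functions of the Cartwright class (or directly, via the Beurling--Malliavin density machinery applied to the real zero set $\L$ of the entire function obtained from $G=\tfrac1{2i}(1-J)K\nu$) this gives that $\L$ has zero density. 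Balancedness will come out of the convergence of the principal-value sums defining $p_n$, which is exactly what I prove next.

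The core computation is to identify $\log|\alpha_n|$. Consider $h(t)=\log|K\nu(t)|$ on $\R$. Since $K\nu$ is outer in $\C_+$, $\log|K\nu|$ is the boundary function of the harmonic function $\log|K\nu(z)|$ in $\C_+$, and its harmonic conjugate is $\arg K\nu$, which jumps by $\pm\pi$ across each $\l_n$ (a simple pole) and is otherwise slowly varying — in fact, after subtracting the contribution of the atoms, $\widetilde h$ is, up to sign and an additive constant, a monotone step function counting the $\l_n$, so $\widetilde h(t)=o(t)$. This is precisely the situation set up at the end of section \ref{TUT}. I would then apply Theorem \ref{TiUl} in the form \eqref{ul}, the Uly'anov half of the $A$-integral identity, to recover $h=\log|K\nu|$ (hence its values, suitably interpreted, near each $\l_n$) from $\widetilde h$. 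Matching the residue of $K\nu$ at $\l_n$ against this reconstruction, and carefully collecting the finite part of $\log|K\nu|$ at $\l_n$ against the sum $\tfrac12\sum_{k\ne n}\log\frac{1+\l_k^2}{(\l_k-\l_n)^2}$ plus the diagonal term $\tfrac12\log(1+\l_n^2)$, yields $\log|\alpha_n|=\const+p_n$, i.e. $|\alpha_n|=\const\,\exp(p_n)$; in particular the defining principal-value limit for $p_n$ converges, which is the balancedness of $\L$. The sign $(-1)^n$ is forced because $K\nu$ is real on $\R$, has no zeros on $\R$, and has a simple pole at each $\l_n$: between consecutive poles $K\nu$ keeps a constant sign on the real line away from the poles only if the residues alternate, so $\alpha_n=\const(-1)^n\exp(p_n)$.

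The main obstacle is the rigorous passage between the $A$-integral reconstruction of $\log|K\nu|$ and the explicit residue formula — i.e. justifying that the "finite part" of $\log|K\nu|$ at the pole $\l_n$ is exactly the principal-value series above. This requires (i) checking that $\widetilde h=\arg K\nu$ genuinely satisfies the hypotheses of Theorem \ref{TiUl}, using $h=\log|K\nu|\in L^1(\Pi)$ (which follows from $\nu$ finite, so $K\nu\in\bigcap_{p<1}L^p(\Pi)$ and $\log^+|K\nu|\in L^1(\Pi)$, while $\log^-|K\nu|\in L^1(\Pi)$ because $K\nu$ is outer and thus has no log-integrability defect), and (ii) the local analysis at each $\l_n$: excising a small neighborhood of $\l_n$ changes $\widetilde h$ there by the jump of $\arg(z-\l_n)$, and the corresponding change in $h$ via \eqref{ul} produces precisely the $k=n$ term $\tfrac12\log(1+\l_n^2)$ after the normalization $\K$ (rather than $K$) that subtracts $t/(1+t^2)$. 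Keeping the normalization consistent (the $1+t^2$ weights in $p_n$ come from using $\K$-type kernels and evaluating at $z=i$ as in \eqref{Ai}) is where the bookkeeping is delicate; everything else is routine once the zero-density and balancedness are in hand.
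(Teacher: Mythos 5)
Your proposal is correct and follows essentially the same route as the paper: you apply the Titchmarsh--Ulyanov $A$-integral theorem to the argument of $K\nu$ (a multiple of the counting function of $\L$ with the point $\l_n$ removed --- exactly your excision step), using $\log|K\nu|\in L^1(\Pi)$ from outerness, to identify $\log|\alpha_n|$ with $p_n+\const$, obtain balancedness from convergence of the principal-value sums, and get the alternating signs because $\arg K\nu$ increases by $\pi$ across each pole. The only noticeable difference is the zero-density step, which the paper gets in one line from Kolmogorov's weak-type estimate applied to the conjugate pair $(\pi n_\L,\log|K\nu|)$ rather than from Krein/Cartwright-class considerations.
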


\begin{proof}
Consider the function $l=\log(K\mu)$ in the upper half-plane. Then $l$ can be represented as
$l(z)=iu - \tilde u$, where $u$ is a continuous branch of argument of $K\nu$ in $\C_+$ and $\tilde u=\log |K\nu|$
is a harmonic conjugate of $u$. Notice that $u$ jumps up by $\pi$ at every $\lan$ and is equal to an integer multiple of $\pi$
between the points of $\L$ on $\R$. Without loss of generality $u=\pi n_\L$ where $n_\L$ is a counting function of $\L$.

\ms\no The property that $\L$ has zero density now follows  from the fact that $\tilde u\in L^1(\Pi)$ and therefore
$\Pi(\{u>t\})=o(1/t)$ as $t\to\infty$, by Kolmogorov's weak type estimates.

\ms\no Let now $\L'=\{\lan\}_{n\neq m}$ for some fixed $m$. Put $v=\pi n_{\L'}$, where
$n_{\L'}$ is the counting function of $\L'$ that is equal to $0$ at $\l_m$.
Then $\tilde v\in L^1(\Pi)$.

\ms\no For $A>0$ denote
$$v_A(x)=\begin{cases}
v(x),\textrm{ if }|v(x)|\le A\\
A,\textrm{ if }v(x)>A\\
-A\textrm{ if }v(x)<A
\end{cases}
$$
Since $\l_m\not\in\supp v$, it follows from the Titchmarsh-Ulyanov theory that
\begin{equation}\K v_N(\l_m)\to \ti v(\l_m)+\const\label{TU}\end{equation}
as $N\to\infty$.

\ms\no Elementary calculations show that
$$Kv_N(\l_m)=\sum_{1\leqslant |m-k|\leqslant N}\frac 12\log\frac{(\l_m-\l_k)^2}{1+\l_k^2}.$$
The last sum tends to $p_m-\log(1+\l_m^2)$ as $N\to\infty$ by the definition of the characteristic sequence.

\ms\no Recall that
$$u(x)-v(x)=\frac\pi2(\sign (x-\l_m)+1).$$
Hence
$$[\K(u-v)](t)=-\frac 12\log\frac{(\l_m-t)^2}{1+\l_k^2}$$
and \eqref{TU} implies
$$\log |\alpha_m|=\log|\Res_{\l_m} K\mu|=p_m+\const.$$

\end{proof}

\begin{remark}\label{remmasses}
As follows from the proof of theorem \ref{main} and the discussion in section \ref{HK},
one can formulate the following converse to the last statement. If $\L$ is a balanced sequence
such that its characteristic sequence satisfies
$$\lim_{|n|\to\infty}\frac{\log|\lan|}{p_n}=0,$$
then
the measure $$\mu=\sum (-1)^n\exp(p_n)\delta_{\lan}$$
satisfies the conditions of the last lemma, i.e. it annihilates polynomials and its Cauchy
transform is zero free and outer in $\C_\pm$. The function $1/K\mu$ is the unique, up to a constant multiple, Hamburger
entire function with the zero set $\L$, see section \ref{HK}.

\end{remark}

\subsection{Main proofs}\label{proofs}

\begin{proof}[Proof of theorem \ref{main}]
$$$$
\ms\no I) First, assume that polynomials are not dense in $C_W$. Then there exists a non-zero real $W$-finite measure $\mu$,
$\mu\perp\P$.
 Therefore by lemma \ref{t66} there exists a discrete measure
$\nu$ satisfying the conditions of the lemma. Denote by $\L$ the sequence supporting $\nu$. Since $\nu$ is $W$-finite, we obtain \eqref{condition}
from lemma \ref{masses}.

\bs\bs

\no II) Now suppose that there exists a zero density balanced sequence $\L$ satisfying \eqref{condition}. Since  $W$
grows faster than polynomials, the measure $$\mu=\sum(-1)^n\exp(p_n)\delta_{\lan}   $$
is finite.

\ms\no Let us notice also that the limit
$$F_\L(z)=\lim_{N\to\infty}(-1)^N\prod_{-N}^N\frac{\sqrt{1+\lan^2}}{z-\lan}$$
exists for any $z\not\in \L$ and defines a non-vanishing analytic function in $\C\sm\L$.
This follows from the observation that every partial product satisfies
$$\log \left|\prod_{-N}^N\frac{\sqrt{1+\lan^2}}{z-\lan}\right|=\sum_{-N}^N\log\frac{\sqrt{1+\lan^2}}{|z-\lan|}$$
and the property that $\L$ has zero density and is balanced.
The function $F_\L$ has simple poles at the points of $\L$ satisfying
\begin{equation}\log |\Res_{\lan}F_\L|=p_n+C\label{pms}\end{equation}
and decays sup-polynomially along $i\R$.
The argument of $F_L$ is equal to $\pi n_\L$ on $\R$ and therefore the signs of the residues
alternate.
Hence $F_L-e^CK\mu$ is an entire function
of exponential type $0$ that tends to zero along the imaginary axis. It follows
that $F_L-e^CK\mu\equiv 0$ and $F_L=\const K\mu$. Thus the Cauchy integral of $K\mu$
decays sup-polynomially along $i\R_+$ and, by lemma \ref{growth}, $\mu$ annihilates polynomials.
Since $\mu$ is $W$-finite, by duality polynomials are not dense in $C_W$.
\end{proof}

\begin{proof}[Proof of corollary \ref{discrete}]
I) Suppose that polynomials are dense in $L^p(\mu)$ but the sum is finite for some balanced zero density $\G\subset \L$.
By theorem \ref{tBakan}, $\mu=W^{-p}\nu$ for some weight such that polynomials are dense in $C_W$ and some finite measure $\nu$.
Then $W\in L^p(\mu)$. Note that the function $\phi$ defined as $\phi(\gamma_n)=\exp (p_n)/\mu(\{\gamma_n\})$
on $\G$ and $\phi(\lan)=0$ for $\lan\not\in \G$ belongs to $L^q(\mu)$.
Hence
$$<W,\phi>_\mu=\sum_{\G}  W(\gamma_n)\exp(p_n)<\infty$$
which contradicts theorem \ref{main}.

\ms\no Conversely, suppose that polynomials are not dense in $L^p(\mu)$ but the sum is infinite for all balanced
zero density subsequences of $\L$.
Then $f\mu$ annihilates polynomials for some $f\in L^q(\mu)$. If $Kf\mu$ is non-vanishing
in $\C\setminus\supp f\mu$ and outer in $\C_\pm$, then by lemma \ref{masses} $$(f\mu)(\{\gamma_n\})=\const (-1)^n\exp(p_n),$$
where $\G=\{\gamma_n\}=\supp f\mu$ and $P=\{p_n\}$ is the characteristic sequence of $\G$. Hence the
sum in the statement
is finite for $\G$ because $f\in L^q(\mu)$.

\ms\no If $Kf\mu$ has a zero at some point $a\not\in\supp f\mu$ then
$$\frac{Kf\mu}{z-a}=Kg\mu$$
where $g=f/(z-a)$, see for instance the proof of lemma \ref{growth}. Observe that $g\in L^q(\mu)$ and $g\mu$ still annihilates polynomials
by lemma \ref{growth}. Since $\L$ has density zero and $f\in L^q(\mu)$, the measure
$g\mu$ is $W$-finite for the weight $W$ defined as $W(\lan)=[\mu(\{\lan\})]^{-1/p}$
and as infinity elsewhere.
Hence polynomials are not dense in $C_W$ and by theorem \ref{main}, \eqref{condition} holds for some subsequence $\G$
of $\L$. Since for every $h\in L^p(\mu)$,
$$ |h(\lan)|\leqslant \const[\mu(\{\lan\})]^{-1/p}=\const W(\lan),$$ we have
$$\infty>\sum W(\gamma_n)\exp(p_n)\geqslant\left|\sum_{} h(\gamma_n)\exp(p_n)\right|=
$$$$\left|\sum \mu(\{\gamma_n\}) h(\gamma_n)\frac{\exp(p_n)}{\mu(\{\gamma_n\})}\right|=|<h,\phi>|,$$
where   $p_n$ is the characteristic sequence of $\G$ and $\phi(\gamma_n)=\exp(p_n)/\mu(\{\gamma_n\})$. Hence $\phi\in L^q(\mu)$ which implies that the sum
in I) is finite for $\G$.

\ms\no If $Kf\mu$ is non-vanishing but has a non-trivial inner factor $e^{2iaz},\ a>0$ in $\C_+$ then
$$\frac{Kf\mu}{e^{iaz}}=Kh\mu$$
with $h=e^{-iaz}f$, as follows for instance from theorems 3.3 and 3.4 in \cite{Clark}.
Then the
Cauchy integral $K(f-\frac h2)\mu$ still annihilates polynomials by lemma \ref{growth} and vanishes at
infinitely many points in $\C_+$. Hence one can factor out one of the zeros and repeat the previous argument. The case when $Kf\mu$ is non-outer in $\C_-$ is similar.

\ms\no II) Can be proved in a similar way.

\ms\no III) Follows directly from theorem \ref{main}.

\end{proof}

\section{Examples and corollaries}\label{exandcor}

\ms\no This section contains further discussion of theorem \ref{main} including its relations with some of the known results.

\ms\no A classical theorem by Hall \cite{Hall} says that if
$$\int_{-\infty}^\infty\frac{\log W}{1+x^2}dx<\infty$$
for a weight $W$ then polynomials are not dense in $C_W$.
Indeed, if $F$ is an outer function in $\C_+$ satisfying
$$|F|=\frac 1{(1+x^2)W},$$ then the measure $e^{ix}F(x)dx$ is a $W$-finite
measure that annihilates polynomials by lemma \ref{decay}.

\ms\no A direct inverse to this statement is false. Even if one requires
that $\log W$ is poisson unsummable and $W$ is monotone on $\R_\pm$,
the polynomials may still not be dense in $C_W$, as follows from an example given
in \cite{Koosis}.

\subsection{Log-convex weights}\label{logconv}
We say that
$f:E\subset\R_+\to\R$ is log-convex if it is convex as a function of $\log x$,
i.e. if the function $g(t)=f(e^t)$ is convex on $S=\log E=\{\log x|\ x\in E\}$. In particular, a twice differentiable
function $f$ is log-convex on an interval $(a,b)\subset \R_+$ if $f'(x)+xf''(x)\geqslant 0$ for all $x\in (a,b)$.

\ms\no The following classical result, published by L. Carleson in \cite{Carleson}, but seemingly known earlier to several other mathematicians (see for instance \cite{IK}), is
 a partial inverse to Hall's theorem.

\begin{theorem}\label{Carleson}
Let $W$ be an even weight that is log-convex on $\R_+$. Then polynomials are not dense in $C_W$ if and only if $\log W\in L^1(\Pi)$.

\end{theorem}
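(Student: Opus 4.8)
The plan is to prove Theorem~\ref{Carleson} by combining the general criterion of Theorem~\ref{main} with the special structure afforded by log-convexity. One direction is already Hall's theorem (discussed just above): if $\log W\in L^1(\Pi)$ then polynomials are not dense in $C_W$, with no convexity needed. So the substance is the converse: assuming $W$ is even, log-convex on $\R_+$, and $\log W\notin L^1(\Pi)$, we must show polynomials \emph{are} dense in $C_W$, i.e. that no balanced zero-density sequence $\L$ can satisfy \eqref{condition}. By Remark~\ref{even} we may restrict attention to even sequences $\L$, and since $\L$ is even it is automatically balanced, so the only constraints are zero density and $\sum W(\lan)\exp(p_n)<\infty$.

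First I would write out the characteristic sequence for an even zero-density $\L=\{\pm\mu_k\}$ (with $0<\mu_1<\mu_2<\cdots$) in a form adapted to the problem: using evenness to pair the terms $\lan$ and $-\lan$, the principal-value sum defining $p_n$ collapses and one gets, up to bounded error, an expression of the shape $p_n \approx \log\bigl|F_\L(\mu_n)\bigr|$-type quantities where $F_\L$ is the (even) canonical product over $\L$; equivalently $p_n$ is governed by the counting function $n_\L$ through the formula $p_n = \int \log\frac{1+t^2}{(t-\mu_n)^2}\,dn_{\L'}(t)+\log(1+\mu_n^2)$, integrate by parts. The key point is that for an even sequence this can be rewritten in terms of $\int \bigl(n_\L(t)-\text{something}\bigr)\,dt/t$-type integrals, and $\exp(p_n)$ becomes comparable to the reciprocal of a product measuring how sparse $\L$ is near $\mu_n$.

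The heart of the argument is then: suppose for contradiction that such an even zero-density $\L$ with $\sum W(\mu_n)\exp(p_n)<\infty$ exists. Using log-convexity of $W$ on $\R_+$, I would replace the discrete data $W(\mu_n)$ by the convex minorant / interpolant and transfer the summability $\sum W(\mu_n)\exp(p_n)<\infty$ into an integral estimate. Convexity of $g(t)=\log W(e^t)$ lets one compare the sum $\sum \exp\bigl(g(\log\mu_n)+p_n\bigr)$ with $\int \exp\bigl(g(s)-\rho(s)\bigr)\,ds$, where $\rho$ encodes the "log of the gap structure" of $\L$, and a Legendre-transform / Laplace-type estimate shows that finiteness of this integral forces $g(s) \le -\rho(s) + o(s)$ along a sequence, hence forces $\log W$ to grow slowly enough on a sparse set; then a second convexity step (the minimal log-convex function through those points is still log-convex) upgrades "slow growth on a sparse set" to "slow growth everywhere", yielding $\int \frac{\log W}{1+x^2}\,dx<\infty$, i.e. $\log W\in L^1(\Pi)$ — contradicting our hypothesis. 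This is the step I expect to be the main obstacle: making precise the passage from the \emph{discrete} condition \eqref{condition} on $\L$ to the \emph{continuous} integrability of $\log W$, and controlling the error terms in $p_n$ (which are $O(\log\mu_n)$, harmless by Remark~\ref{x^n} since $W$ is non-degenerate here) uniformly.

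Finally, I would package the argument so that the log-convexity is used only through the two elementary facts it provides — that $g=\log W(e^\cdot)$ has a supporting line at every point, and that the pointwise supremum of a family of affine functions is convex — which keeps the proof self-contained modulo Theorem~\ref{main}, Remark~\ref{even}, and Hall's theorem. If a cleaner route is available, I would instead quote the description of zero sets of Krein entire functions of zero exponential type from section~\ref{HK}: log-convexity of $W$ means $1/W$ is log-concave, and the annihilating measure produced by Theorem~\ref{main}, whose Cauchy transform is (by Remark~\ref{remmasses}) the reciprocal of a Hamburger entire function with zero set $\L$, would have to have masses $\exp(p_n)$ decaying faster than the log-concave envelope permits unless $\log W\in L^1(\Pi)$; but I expect the direct convexity computation above to be the more transparent write-up.
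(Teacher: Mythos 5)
Your skeleton matches the paper's: Hall's theorem gives the direction $\log W\in L^1(\Pi)\Rightarrow$ non-density, and for the converse the paper likewise invokes Theorem \ref{main} together with Remark \ref{even} to produce an even, balanced, zero-density $\L$ with $\sum W(\lan)\exp(p_n)<\infty$, and then uses log-convexity to interpolate. But the step you yourself flag as "the main obstacle" — passing from the discrete bounds $\log W(\lan)\le -p_n+O(1)$ to $\log W\in L^1(\Pi)$ — is exactly the content of the theorem, and your sketch does not supply the two ingredients that make it work. First, you need a concrete majorant of $\log W$ on each gap $(\lan,\l_{n+1})$, and for that the paper observes that for an even sequence $S$ the potential $v_S(x)=\frac12\sum\log\bigl|(s_k-x)^2/(1+s_k^2)\bigr|$ is \emph{concave in $\log x$} on every gap of $\R_+$ (i.e.\ $-v_S$ is log-convex there); taking $\G_n=\L\setminus\{\lan,\l_{-n},\l_{n+1},\l_{-n-1}\}$, condition \eqref{condition} gives $\log W\le v_{\G_n}+\const$ at the two endpoints, and since $\log W-v_{\G_n}$ is then convex in $\log x$ the bound propagates to the whole interval. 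Your "second convexity step (the minimal log-convex function through those points is still log-convex)" is not a substitute: a log-convex function bounded at the points of a zero-density sequence by an arbitrary sequence of numbers need not be Poisson-summable, so the chordal interpolant argument only helps once the interpolating majorant is identified with a potential of the above type.

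Second, even after the gap-by-gap bound one must know that the resulting majorant is Poisson-summable; the paper gets this from $v_\L\in L^1(\Pi)$ (which comes from the annihilating measure behind Theorem \ref{main}, since $v_\L$ is essentially $\log|K\mu|$ on $\R$) together with the summability of the corrections, $\sum_n\int_{\lan}^{\l_{n+1}}|v_\L-v_{\G_n}|\,d\Pi<\infty$, and the positivity $\log W\ge 0$ to handle signs. Your "Legendre-transform / Laplace-type estimate" and the claim that finiteness forces $g(s)\le-\rho(s)+o(s)$ "along a sequence" do not engage with either point (indeed the bound at the points of $\L$ is immediate from boundedness of the summands; the issue is never the sparse-set bound but its continuous upgrade). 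So as written the proposal is a correct outline of the same strategy as the paper with its central analytic step missing; to complete it you would need precisely the log-concavity of the gap potentials $v_{\G_n}$ and the Poisson-summability of $v_\L$ with controlled errors.
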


\begin{proof}
If $S=\{s_n\}$ is  an even discrete sequence of finite density denote by $v_S$ the function
$$v_S(x)=\frac12\sum \log\left|\frac{(s_n-x)^2}{1+s_n^2}\right|,$$
where the sum is understood in terms of normal convergence of partial sums $\sum_{|n|<N}$ in $\C\setminus\L$. Simple computations show that $-v_S$ is log-convex on every interval $(s_n,s_{n+1}),\ \lan\geqslant 0$.

\ms\no To prove the theorem, notice that in one direction it follows from Hall's result. In the opposite direction, suppose that
polynomials are not dense in $C_W$. Then  there exists a sequence $\L$ like in the statement of  theorem \ref{main}. By remark \ref{even},
$\L$ can be chosen to be even.

\ms\no Fix $n>0$ and denote $\Gamma_n=\L\setminus\{\lan,\l_{-n},\l_{n+1},\l_{-n-1}\}$. Then \eqref{condition} implies
$$\log W(\l_k)\leqslant v_{\Gamma_n}(\l_k) + \frac 12\log_-\frac{(\lan-\l_{n+1})^2}{1+\l_{n+1}^2}+\const,\textrm{ for }k=n,n+1.$$
Since both $W$ and $-v_{\Gamma_n}$ are log-convex on $(\lan,\l_{n+1})$ the inequality can be extended to the whole interval
$(\lan,\l_{n+1})$ for every $n$. Since $v_\L\in L^1(\Pi)$, the quantaty
$$\sum_n\int_{\lan}^{\l_{n+1}}|v_\L-v_{\G_n}|d\Pi$$
is finite  and $\log W\geqslant 0$, this implies that $\log W\in L^1(\Pi)$.

\end{proof}

\subsection{Hamburger and Krein entire functions}\label{HK}
The Hamburger class of entire functions consists of all transcendental (non-polynomial) entire functions $F$ of exponential type zero,
that are real on $\R$, have only real simple zeros $\{\lan\}\subset\R$ and satisfy
$$\lim_{|n|\to\infty}\frac{|\lan|^a}{|F'(\lan)|}=0$$
for all $a>0$. If instead of the last equation the derivatives of $F$ satisfy
$$\sum\frac{1}{|F'(\lan)|}<\infty,$$
Then $F$ is said to belong to the Krein class of zero-type entire functions. Since zero sets of entire functions of zero exponential type
have zero denisty,
the Krein class contains the Hamburger class.

\ms\no Both classes play important roles in  approximation problems, see  \cite{BS} for further references.

\ms\no We say that $\L=\{\lan\}$ is a zero set of $F$ if $\{F=0\}=\L$. Our methods give the following description
of zero sets of Hamburger and Krein functions.
$$$$
\begin{proposition}$$$$
I) A discrete sequence $\L=\{\lan\}\subset\R$ is a zero set of a Hamburger entire function if and only if
$\L$ is a balanced zero density sequence whose characteristic sequence $P=\{p_n\}$ satisfies
$$\lim_{|n|\to\infty}\frac{\log|\lan|}{p_n}=0.$$
If $\L$ is such a sequence then there exists a unique up to a constant multiple Hamburger entire function $F$
with the zero set $\L$. The function $F$ is given by the formula
$$F=\frac {\const}{K\mu},$$
where $\mu$ is a finite discrete measure concentrated on $\L$,
$$\mu=\sum (-1)^n\exp(p_n)\delta_{\lan}.$$

\ms\no II) A discrete sequence $\L=\{\lan\}\subset\R$ is a zero set of a Krein entire function of exponential type zero if and only if
$\L$ is a balanced zero density sequence whose characteristic sequence $P=\{p_n\}$ satisfies
$$\sum\exp(p_n)<\infty.$$
If $\L$ is such a sequence then there exists a unique up to a constant multiple zero type Krein entire function $F$
with the zero set $\L$. The function $F$ is given by the formula
$$F=\frac {\const}{K\mu},$$
where $\mu$ is a finite discrete measure concentrated on $\L$,
$$\mu=\sum (-1)^n\exp(p_n)\delta_{\lan}.$$

\end{proposition}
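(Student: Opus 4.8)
The plan is to pass from entire functions to measures and Cauchy integrals, so that the description of zero sets becomes an application of Lemma~\ref{masses} in the ``only if'' direction and of the construction carried out in the proof of Theorem~\ref{main} in the ``if'' direction. The bridge in both directions is the identity $1/F=\const\cdot K\mu_F$, where $F$ is the function in question and $\mu_F=\sum_n F'(\lan)^{-1}\delta_{\lan}$ is the discrete measure built from the residues of $1/F$; note that its atoms alternate in sign because $F$ changes sign at each of its simple real zeros, and that $1/F$ is meromorphic and zero-free in $\C$ with poles exactly at $\L$.

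\emph{Necessity.} Let $F$ lie in the Hamburger (resp.\ Krein) class with zero set $\L=\{\lan\}$. Since $F$ has exponential type zero, $\L$ has zero density, and in particular $\sum|\lan|^{-2}<\infty$; together with the derivative hypothesis this makes $\mu_F$ a finite measure (directly in the Krein case; in the Hamburger case because $|F'(\lan)|^{-1}=o(|\lan|^{-2})$). I would then establish $1/F=\pi K\mu_F$ by showing that the entire function $g=1/F-\pi K\mu_F$ vanishes identically: it tends to $0$ along $i\R$, since $K\mu_F(iy)\to 0$ and $|F(iy)|$ grows faster than any power of $y$ (the product $\prod_n(1+y^2/\lan^2)$ has arbitrarily many factors exceeding $y$, because infinitely many zeros lie in $|\lan|\le\sqrt y$), and it has exponential type zero, by the standard minimum modulus estimate for $F$ on circles avoiding a sparse family of exceptional disks around $\L$ together with the elementary bound on $K\mu_F$ away from $\R$; Phragm\'en--Lindel\"of, exactly as in the proof of Lemma~\ref{growth}, then forces $g\equiv 0$. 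Consequently $K\mu_F=\const/F$ is zero-free in $\C$, lies in $H^p(\C_\pm)$ for all $p<1$ (so $\log|K\mu_F|=-\log|F|+\const\in L^1(\Pi)$), and carries no singular inner factor (its only singularities are the poles on the discrete set $\L$ and its mean type is $0$), hence is outer in $\C_\pm$. The argument proving Lemma~\ref{masses} now applies to $\nu=\mu_F$ and yields that $\L$ is balanced and $|F'(\lan)|=\const\cdot\exp(-p_n)$, with $P=\{p_n\}$ the characteristic sequence of $\L$. The Hamburger requirement $|\lan|^a/|F'(\lan)|\to 0$ for all $a>0$ is then exactly $\log|\lan|/p_n\to 0$, and the Krein requirement $\sum|F'(\lan)|^{-1}<\infty$ is exactly $\sum\exp(p_n)<\infty$; the stated formula $F=\const/K\mu$ with $\mu=\sum(-1)^n\exp(p_n)\delta_{\lan}$ also follows.

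\emph{Sufficiency and uniqueness.} Conversely, given a balanced zero-density $\L$ whose characteristic sequence satisfies $\log|\lan|/p_n\to 0$ (resp.\ $\sum\exp(p_n)<\infty$), I would take the function $F_\L$ from the proof of Theorem~\ref{main} --- it equals $\const\cdot K\mu$ for the finite measure $\mu=\sum(-1)^n\exp(p_n)\delta_{\lan}$, is zero-free, has simple poles exactly on $\L$ with $\log|\Res_{\lan}F_\L|=p_n+C$ (formula~\eqref{pms}), and decays faster than any power along $i\R$ --- and set $F=\const/F_\L$. Then $F$ is entire, transcendental, real on $\R$, with exactly the simple real zeros $\L$, and $|F'(\lan)|=\const\cdot\exp(-p_n)$, so the Krein (resp.\ Hamburger) condition on $F$ is precisely the stated hypothesis on $P$. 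The one substantive point is that $F$ has exponential type exactly zero: $K\mu$, being a Cauchy integral of a finite measure, is of bounded type and mean type $0$ in $\C_\pm$, and (again having only the poles $\L$ as singularities) is outer there; hence $1/K\mu=\const\cdot F$ is of bounded type and mean type $0$ in both half-planes, and an entire function of bounded type in $\C_\pm$ whose two mean types vanish has exponential type zero. Uniqueness is then immediate: if $F_1,F_2$ both have zero set $\L$ and lie in the class, the necessity argument identifies the residues of $1/F_j$ with $c_j(-1)^n\exp(p_n)$, so $1/F_1$ and $1/F_2$ are Cauchy integrals of proportional measures and $F_1=\const\cdot F_2$.

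\emph{Main obstacle.} I expect the crux to be the identity $1/F=\const\cdot K\mu_F$ of the necessity part --- equivalently, the statement $\mu_F\perp\P$ --- since this is the only place where membership in one of the two classes, rather than mere exponential type zero, is genuinely used: one must extract both the super-polynomial growth of $|F|$ on $i\R$ and the bounded-type/outer structure of $1/F$ (the latter needed, through Lemma~\ref{masses}, for the computation of the characteristic sequence), while the mirror facts --- the super-polynomial decay and outerness of $F_\L$ in the converse direction --- are exactly what the proof of Theorem~\ref{main} already supplies. A minor technical point is checking that the exceptional radii in the minimum modulus estimate do not obstruct the Phragm\'en--Lindel\"of step, which is routine given zero density.
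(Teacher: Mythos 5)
Your proposal is correct and follows essentially the same route as the paper: pass to the residue measure $\mu_F$ with $1/F=\const\cdot K\mu_F$ (Phragm\'en--Lindel\"of on the zero-type difference), apply Lemma~\ref{masses} for necessity, and use the function $F_\L=\const\cdot K\mu$ from the proof of Theorem~\ref{main} for sufficiency and uniqueness. You merely spell out details the paper delegates to ``lemma~\ref{masses} and the proof of theorem~\ref{main}'' --- notably the outerness of $1/F$ and the type-zero verification --- and these elaborations are sound.
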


\begin{proof}
I) If $F$ is a Hamburger function then one can consider a measure $\mu$ concentrated on $\L$, $\mu(\{\lan\})=1/F'(\lan)$.
By noting that $1/F$ and $K\mu$ have the same residues at $\L$, we conclude that $1/F-K\mu$ is an entire function of zero
exponential type that tends to zero along the imaginary axis. Hence $F=K\mu$. The rest of the statement follows from
lemma \ref{masses} and the proof of theorem \ref{main}.

\ms\no II) can be established in a similar way.
\end{proof}

\subsection{A result by Borichev and Sodin}\label{sectionBS} One of the main results of the well-known paper by Borichev and Sodin, devoted to the so-called Hamburger moment problem, is the following
theorem on density of polynomials in $L^p(\mu)$, where $\mu$ is a measure concentrated on a zero set of
a Hamburger function.

\begin{theorem}\label{BStheorem}
Let $\L=\{\lan\}$ be a zero set of a Hamburger function and let $\mu=\sum\alpha_n\delta_{\lan}$ be a finite positive measure.
If $1<p<\infty$ then
polynomials are dense in $L^p(\mu)$ if and only if
for any Hamburger function $F$, such that $\{F=0\}=\G\subset\L$,
$$\sum_{\lan\in\G}\left[\frac 1{\alpha_n^{1/p}|F'(\lan)|}\right]^{\frac p{p-1}}=\infty.$$
Polynomials are dense in $L^1(\mu)$ if and only if
for any Hamburger function $F$, such that $\{F=0\}=\G\subset\L$,
$$\liminf_{|\lan|\to\infty,\ \lan\in\G}  \alpha_n F'(\lan)=0.$$
Polynomials are dense in $C_W$, where $W(\lan)=1/\alpha_n$ and $W\equiv\infty$ on $\R\setminus\L$, if and only if
for any Hamburger function $F$, such that $\{F=0\}=\G\subset\L$,
$$\sum_{\lan\in\G}\frac 1{\alpha_n|F'(\lan)|}=\infty.$$
\end{theorem}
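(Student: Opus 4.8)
The plan is to show that this theorem is a direct translation of Corollary \ref{discrete} (parts I, II, III) combined with Proposition \ref{HK}'s characterization of Hamburger zero sets. First I would observe that $\L$ being a zero set of a Hamburger function means, by Proposition \ref{HK}, that $\L$ is a balanced zero density sequence and that any subsequence $\G\subset\L$ which is itself a zero set of a Hamburger function is exactly a balanced zero density subsequence whose characteristic sequence $Q=\{q_n\}$ satisfies $\log|\gamma_n|/q_n\to 0$. Moreover, for such a $\G$, the residue formula in Lemma \ref{masses} (or equation \eqref{pms}) gives $\log|F'(\gamma_n)|^{-1}=q_n+\const$, i.e. $|F'(\gamma_n)|^{-1}\asymp \const\exp(q_n)$, where $F$ is the essentially unique Hamburger function with zero set $\G$. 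So the three displayed conditions in Theorem \ref{BStheorem} become, up to the harmless additive constant,
\[
\sum_{\G}\bigl[\alpha_n^{-1/p}\exp(q_n)\bigr]^{q}=\infty\quad(1<p<\infty,\ \tfrac1p+\tfrac1q=1),\qquad \liminf_{|n|\to\infty}\alpha_n\exp(-q_n)=0,\qquad \sum_{\G}\alpha_n^{-1}\exp(q_n)=\infty,
\]
and negating them reproduces precisely the summability conditions appearing in Corollary \ref{discrete} I), II), III).

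Next I would address the one real gap between the two formulations: Corollary \ref{discrete} quantifies over \emph{all} balanced zero density subsequences $\G\subset\L$, whereas Theorem \ref{BStheorem} quantifies only over those $\G$ that are zero sets of Hamburger functions, i.e. those additionally satisfying $\log|\gamma_n|/q_n\to 0$. The point is that this extra constraint costs nothing: if $\G$ is any balanced zero density subsequence along which the relevant sum in Corollary \ref{discrete} converges, one can pass to a further subsequence $\G'\subset\G$ for which the characteristic sequence grows fast enough that $\log|\gamma_n|/q'_n\to 0$, while keeping the sum finite. This uses Remark \ref{x^n}: deleting points from $\G$ changes $q_n$ only by $O(\log|\gamma_n|)$ on the retained points, so a convergent sum stays convergent (the weights $W(\gamma_n)\exp(q_n)$ can only be multiplied by a bounded-power factor), and by thinning aggressively one forces $q'_n\to\infty$ faster than any multiple of $\log|\gamma_n|$. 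Thus the class of Hamburger-zero-set subsequences is ``cofinal'' for the purpose of detecting non-density, and the two quantifications are equivalent.

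With these two reductions in hand, each of the three equivalences in Theorem \ref{BStheorem} follows by reading off the corresponding part of Corollary \ref{discrete}: part I of the corollary gives the $L^p$ statement (the exponent $p/(p-1)=q$ matches, and $\alpha_n^{1-q}=\alpha_n^{-q/p}$ matches $\alpha_n^{-q/p}$ after substituting $\mu(\{\gamma_n\})=\alpha_n$); part II gives the $L^1$ statement (note $\exp(q_n)=O(\alpha_n)$ is the negation of $\liminf \alpha_n\exp(-q_n)=0$ along a subsequence, which is exactly what one needs); and part III gives the $C_W$ statement verbatim once $W(\gamma_n)=1/\alpha_n$ and $|F'(\gamma_n)|^{-1}\asymp\exp(q_n)$ are inserted. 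The main obstacle is the cofinality argument of the previous paragraph — one must be careful that thinning a subsequence simultaneously (a) keeps the characteristic sequence's defining series convergent (which needs the balanced/zero-density structure to be preserved, and this is where the $O(\log|\lambda_n|)$ bookkeeping of Remark \ref{x^n} is essential), and (b) accelerates the growth of $q_n$ relative to $\log|\gamma_n|$; everything else is a matter of matching exponents and recalling that $|F'(\gamma_n)|^{-1}\asymp\exp(q_n)$ from Lemma \ref{masses}.
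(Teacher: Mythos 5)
Your overall route is the same as the paper's: translate the conditions of Theorem \ref{BStheorem} into those of Corollary \ref{discrete} via the proposition of section \ref{HK} and the residue identity $|F'(\gamma_n)|^{-1}=\const\,\exp(q_n)$ from Lemma \ref{masses}, and the direction ``there is a Hamburger $F$ with convergent sum $\Rightarrow$ polynomials not dense'' goes through exactly as you say. The problem is your treatment of the quantifier-matching step. Your ``cofinality by thinning'' argument rests on Remark \ref{x^n}, which only asserts that adding or removing \emph{finitely many} points changes $p_n$ by $O(\log|\lan|)$. Deleting an infinite subsequence is a different matter: each deleted point of magnitude comparable to $\gamma_n$ shifts the retained $p_n$ by an amount bounded away from zero, so infinitely many deletions can move $p_n$ by far more than $O(\log|\gamma_n|)$ and in an uncontrolled direction; moreover aggressive thinning can destroy the balanced property (the principal-value sum defining $p_n$ may cease to exist). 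So neither claim (a) nor (b) of your last paragraph is justified as written.

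Fortunately that step is unnecessary: the Hamburger constraint on the subsequence produced by Corollary \ref{discrete} is automatic. Since polynomials lie in $L^p(\mu)$, all moments $\sum_n\alpha_n|\lan|^k$ are finite, so $\alpha_n=O(|\gamma_n|^{-k})$ for every $k$. The convergent sum in Corollary \ref{discrete} I) forces its terms to be bounded, i.e. $\exp(p_n)\leqslant C\,\alpha_n^{1/p}$, and in parts II) and III) one gets $\exp(p_n)=O(\alpha_n)$ directly; combining, $|\gamma_n|^a\exp(p_n)\to 0$ for every $a>0$, which is precisely the condition $\log|\gamma_n|/p_n\to 0$ in part I of the proposition of section \ref{HK}. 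Hence $\G$ is automatically the zero set of a Hamburger function $F$ with $1/|F'(\gamma_n)|=\const\,\exp(p_n)$, and the three conditions of Theorem \ref{BStheorem} are obtained by direct substitution, exactly as in your first paragraph. This is how the gap is closed (the paper leaves it implicit: its written proof records only that $1/F=K\mu$ with $\mu=\const\sum(-1)^n\exp(p_n)\delta_{\lan}$ annihilating polynomials by Lemma \ref{growth}, and then cites Corollary \ref{discrete}); replace your thinning argument by this moment/H\"older observation and the proof is complete.
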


\begin{proof}
By the last proposition, for any Hamburger function $F$, the function $1/F$ is a Cauchy integral
of a finite measure $\mu=\const\sum (-1)^n\exp(p_n)\delta_{\lan}$. Since $F$ is transcendental, it has to grow sup-polynomially along $i\R$. Hence by lemma \ref{growth}
$\mu$ annihilates polynomials. Now the theorem follows from corollary \ref{discrete}.
\end{proof}

\ms\no Note that the condition that $\L$ is a zero set of a Hamburger function can be dropped from the statement
of theorem \ref{BStheorem}.

\subsection{Asymptotics of characteristic sequences and applications}\label{charass}
Let $u$ be a monotone increasing function on $\R$. 
Suppose that the harmonic conjugate function $\tilde u$ is Poisson-summable, i.e. $\tilde u\in L^1(\Pi)$.
Let $\L=\{\lan\}$ be a sequence such that
$u(\lan)=n\pi$.

\ms\no It is not difficult to show that then $\L$ is a zero density balanced sequence.
(This condition is actually equivalent to $\tilde u\in L^1(\Pi)$.)
Let $P=\{p_n\}$ be the characteristic sequence
of $\L$.

\ms\no Elementary estimates yield:

\begin{proposition}\label{conjugate}
Suppose that $u'(x)$ exists and is bounded for large enough $|x|$. Then 
$$p_n= \ti u(\lan) +O(\log |\lan|)$$
as $|n|\to\infty$.
\end{proposition}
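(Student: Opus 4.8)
The plan is to recognize the characteristic value $p_n$ as, up to $O(\log|\lan|)$, the value at $\lan$ of a harmonic conjugate of a counting function of $\L$, and then to trade that counting function for $u$, whose conjugate is Poisson‑summable by hypothesis. First I would record the identity linking $p_n$ to the counting function. Since $\ti u\in L^1(\Pi)$ forces $\L$ to be balanced of zero density, the product $F_\L$ from the proof of Theorem~\ref{main} converges off $\L$; put $v_\L(x)=\sum_k\log\bigl(|x-\lan|/\sqrt{1+\lan^2}\bigr)=-\log|F_\L(x)|$, so that $-v_\L+i\pi n_\L=\log F_\L$ is analytic in $\C\setminus\L$ (here $n_\L$ is the counting function of $\L$ and $\arg F_\L=\pi n_\L$ on $\R$), i.e. $v_\L$ and $\pi n_\L$ are harmonic conjugates in the sense of Lemma~\ref{masses}. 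Isolating the $n$‑th factor, $v_\L(x)=\log|x-\lan|-\tfrac12\log(1+\lan^2)+v_{\L\setminus\{\lan\}}(x)$, and since the definition of $P$ gives $v_{\L\setminus\{\lan\}}(\lan)=\tfrac12\log(1+\lan^2)-p_n$, one gets
$$p_n=-\lim_{x\to\lan}\bigl(v_\L(x)-\log|x-\lan|\bigr)=-\lim_{x\to\lan}\bigl(\widetilde{\pi n_\L}(x)-\log|x-\lan|\bigr).$$
(This is the computation behind \eqref{TU}; it also follows from Theorem~\ref{TiUl} applied to $\pi n_{\L\setminus\{\lan\}}$, exactly as in the proof of Lemma~\ref{masses}.)

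Next I would substitute $u=\pi n_\L+s$. Since $u$ is increasing with $u(\lan)=n\pi$ while $\pi n_\L$ is the step function with jump $\pi$ at each $\lan$, the remainder $s$ is bounded, $|s|\le\pi$, continuous on every gap $(\lan,\l_{n+1})$, and has a single jump $-\pi$ at each $\lan$; hence $\widetilde{\pi n_\L}=\ti u-\ti s$. For large $n$ the hypothesis makes $u$ of class $C^1$ near $\lan$, so $\ti u$ is continuous there and the logarithmic singularities at $\lan$ of $\widetilde{\pi n_\L}$ and of $\ti s$ match. Combining with the previous step,
$$p_n=\ti u(\lan)-\sigma_n,\qquad \sigma_n:=\text{the regularized value of }\ti s\text{ at }\lan\text{ (subtract the matching }\log\text{)},$$
so the whole matter is reduced to proving $\sigma_n=O(\log|\lan|)$.

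For that last estimate I would split the conjugate integral for $\sigma_n$ — using the kernel $\tfrac1{\lan-t}+\tfrac t{1+t^2}$, the normalization that pairs with $\ti u\in L^1(\Pi)$ — at $|t-\lan|=1$. On the near range, subtracting from $s$ the jump $-\tfrac\pi2\sign(t-\lan)$ removes the singularity and leaves a function that is Lipschitz near $\lan$ with constant controlled by $\sup|u'|$, whence a contribution $O(1)$. On the far range one uses $|s|\le\pi$ and the elementary bound
$$\int_{|t-\lan|>1}\Bigl|\frac1{\lan-t}+\frac t{1+t^2}\Bigr|\,dt=\int_{|t-\lan|>1}\frac{|1+t\lan|}{|\lan-t|\,(1+t^2)}\,dt=O(\log|\lan|),$$
the logarithm coming only from the bulk range $|t|\lesssim|\lan|$, where the integrand is $O(1/|\lan-t|)+O(|t|/(1+t^2))$, while beyond it the integrand is $O(|\lan|/t^2)$ and contributes $O(1)$. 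This gives $\sigma_n=O(\log|\lan|)$ and hence $p_n=\ti u(\lan)+O(\log|\lan|)$.

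The main obstacle I anticipate is bookkeeping rather than analysis: threading the several sign and normalization conventions through the first two steps, and justifying the regularized‑conjugate cancellations at the points $\lan$ cleanly. The only genuine estimate is the Kolmogorov‑type kernel bound of the last step, and it is elementary, in keeping with the statement's phrasing; note also that this kernel bound is essentially sharp, which is why the error term is $O(\log|\lan|)$ and cannot in general be improved.
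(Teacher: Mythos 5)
The paper gives no argument for this proposition beyond the phrase ``Elementary estimates yield'', so there is no official proof to compare with; your route --- identify $p_n$ with the regularized value at $\lan$ of the conjugate of $\pi n_\L$ via the Titchmarsh--Ulyanov computation of Lemma \ref{masses}, write $u=\pi n_\L+s$ with $s$ bounded, and estimate the conjugate of $s$ at $\lan$ --- is surely the intended one, and your two quantitative ingredients are sound. Note only that in the near range you also need the separation $\l_{n+1}-\lan\geqslant \pi/\sup|u'|$ (a consequence of $u(\lan)=n\pi$ and bounded $u'$) to know that no other point of $\L$ enters your window $|t-\lan|<1$; shrink the window to half that gap and the Lipschitz argument goes through, while the far-range kernel bound $\int_{|t-\lan|>1}\bigl|\frac 1{\lan-t}+\frac t{1+t^2}\bigr|\,dt=O(\log|\lan|)$ is correct.

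The genuine problem is a sign slip at the combination step, and it is not an innocent one. From your two displayed facts, $p_n=-\lim_{x\to\lan}\bigl(\widetilde{\pi n_\L}(x)-\log|x-\lan|\bigr)$ and $\widetilde{\pi n_\L}=\ti u-\ti s$, what follows is $p_n=-\ti u(\lan)+\sigma_n$, not $p_n=\ti u(\lan)-\sigma_n$; so, carried out correctly in the normalization you yourself fix (the kernel $\frac1{\lan-t}+\frac t{1+t^2}$, which is exactly the convention forced by the identity $\K\ti h=-i\K h+i\K h(i)$ of section \ref{TUT} and by the example values $\ti u(n^{1/\alpha})=-\pi n\tan(\alpha\pi-\frac\pi2)=\pi n\cot(\pi\alpha)>0$ in section \ref{charass}), your argument proves $p_n=-\ti u(\lan)+O(\log|\lan|)$. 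That this is the correct sign can be checked from within the paper: by \eqref{pms}, $e^{p_n}=\const/|F'(\lan)|$ for the zero-type entire function $F$ with zero set $\L$, and for $\L=\{n^{1/\alpha}\}$, $u=\pi x^\alpha$ on $\R_+$, one has $\log|F'(\lan)|=\pi n\cot(\pi\alpha)+O(\log|\lan|)=\ti u(\lan)+O(\log|\lan|)$, so $p_n\to-\infty$ while $\ti u(\lan)\to+\infty$ (and indeed all non-density criteria require $\sum e^{p_n}<\infty$, hence $p_n\to-\infty$, in every example where they apply); moreover the corollary immediately after the proposition pairs $\log W(\lan)\leqslant\ti u(\lan)+O(\log|\lan|)$ with the criterion $\log W(\lan)\leqslant -p_n+O(\log|\lan|)$ of Remark \ref{x^n}, i.e.\ it too uses $-p_n=\ti u(\lan)+O(\log|\lan|)$. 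So either correct your final line to $p_n=-\ti u(\lan)+O(\log|\lan|)$ --- reading the proposition with the opposite conjugation convention, the one implicit in the formula $l=iu-\ti u$ in the proof of Lemma \ref{masses} --- or flip the convention consistently from the start; as written, the first half of your argument uses one convention and the last line the other, and it is precisely this slip that lands you on the printed statement.
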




\ms\no Remark \ref{x^n} together with theorem \ref{main} give the following

\bs
\begin{corollary}
$$$$
I) If $W$ is a regular weight such that $\log W(\lan)\leqslant \ti u(\lan) + O(\log |\lan|)$ then polynomials are not dense in $C_W$.

\bs\no II) If $\mu=\sum \alpha_n\delta_{\lan}$ is a finite positive measure such that
$$\sum \alpha_n^{1-q}\exp{ q p_n}<\infty$$
for some $1<q<\infty$ then polynomials are not dense in $L^p(\mu), \frac 1p+\frac 1q=1$.

\bs\no III) If
$$\alpha_n=O(\exp{p_n})$$
then polynomials are not dense in $L^1(\mu)$.
\end{corollary}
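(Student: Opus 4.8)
The plan is to derive all three parts from Remark~\ref{x^n}, Theorem~\ref{main} and Proposition~\ref{conjugate}, which together reduce every statement to an inequality between $\log W(\lan)$ (resp.\ a summability condition on the $\alpha_n$) and the characteristic sequence $P=\{p_n\}$ of a suitable balanced zero-density sequence. The crucial observation is that Proposition~\ref{conjugate} lets us replace $p_n$ by $\ti u(\lan)$ at the cost of an $O(\log|\lan|)$ error, which is exactly the slack that Remark~\ref{x^n} says we are free to absorb when working with non-degenerate (``regular'') weights via the reformulation \eqref{conditionlog}. So the whole corollary is really a dictionary translation; the content is already in the earlier results.

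For part~I), I would start from the hypothesis $\log W(\lan)\leqslant \ti u(\lan)+O(\log|\lan|)$. By Proposition~\ref{conjugate}, $\ti u(\lan)=p_n+O(\log|\lan|)$, so $\log W(\lan)\leqslant -(-p_n)+O(\log|\lan|)$, i.e.\ $\log W(\lan)\leqslant -p_n+O(\log|\lan|)$ after noting the sign convention (one should double-check here whether the intended inequality in the statement should read $-\ti u$ rather than $\ti u$, or whether $u$ is taken decreasing; in any case the point is that the hypothesis, correctly signed, is precisely \eqref{conditionlog} for the sequence $\L=\{\lan\}$ with $u(\lan)=n\pi$). Since $\L$ is a balanced zero-density sequence by the remark preceding Proposition~\ref{conjugate}, Theorem~\ref{main} (in the form \eqref{conditionlog} of Remark~\ref{x^n}) immediately gives that polynomials are not dense in $C_W$.

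Parts~II) and III) are then obtained by feeding part~I), or rather Theorem~\ref{main} directly, into the discrete-measure machinery of Corollary~\ref{discrete}. For II), given $\mu=\sum\alpha_n\delta_{\lan}$ with $\sum \alpha_n^{1-q}\exp(qp_n)<\infty$, this is exactly the hypothesis of the ``if'' direction of Corollary~\ref{discrete}~I) with $\G=\L$: the finiteness of $\sum_{\L}[\mu(\{\lan\})]^{1-q}\exp(qp_n)$ forces non-density of polynomials in $L^p(\mu)$ with $\tfrac1p+\tfrac1q=1$. For III), $\alpha_n=O(\exp p_n)$ is precisely $\exp p_n = O(\mu(\{\lan\}))$ reversed — wait, rather $\exp p_n \geqslant \const\,\alpha_n$ is not what we want; the correct reading is that $\alpha_n = O(\exp p_n)$ means $\exp p_n$ dominates $\alpha_n$, which is the hypothesis $\exp p_n = O(\mu(\{\gamma_n\}))$ of Corollary~\ref{discrete}~II) only if the roles match up, so one applies Corollary~\ref{discrete}~II) with $\G=\L$ to conclude non-density in $L^1(\mu)$. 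In each case one takes $\G=\L$ itself rather than searching for a subsequence, which is legitimate because we are proving the ``there exists a balanced zero-density $\G$'' side of the equivalences.

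The main obstacle I anticipate is purely bookkeeping of signs and directions: Theorem~\ref{main} and Corollary~\ref{discrete} are stated as equivalences, and to get a one-directional sufficient condition one must be careful to use the correct implication and to exhibit the witnessing sequence explicitly (here $\G=\L$ with $u(\lan)=n\pi$), while also keeping track of whether $\ti u$ or $-\ti u$ appears and whether the $O(\log|\lan|)$ term has the favorable sign for the application of \eqref{conditionlog}. Once the conventions are pinned down, no real estimate is needed beyond Proposition~\ref{conjugate}, which is quoted. I would therefore present the proof as three short paragraphs, each a two-line deduction, with a single sentence at the start fixing the sign convention on $u$ and $\ti u$.
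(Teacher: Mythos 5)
Your skeleton is the paper's own: the paper gives no argument for this corollary beyond citing Remark \ref{x^n} and Theorem \ref{main} (with Proposition \ref{conjugate} translating $p_n$ into $\ti u(\lan)$, and parts II), III) meant to be read off from Corollary \ref{discrete} with $\G=\L$), and that is exactly the route you take. The problem is that the two places where you hedge are precisely where the deduction has to be closed, and as written it is not. For part I): from $\log W(\lan)\leqslant \ti u(\lan)+O(\log|\lan|)$ and Proposition \ref{conjugate} \emph{as printed} ($p_n=\ti u(\lan)+O(\log|\lan|)$) you only obtain $\log W(\lan)\leqslant p_n+O(\log|\lan|)$, which is not \eqref{conditionlog}; rewriting $p_n$ as $-(-p_n)$ does not change the inequality. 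What is needed is $p_n=-\ti u(\lan)+O(\log|\lan|)$, and this is the relation consistent with the paper's conventions: with the conjugation fixed by \eqref{Ai} (so that $u+i\ti u$ is the boundary value of a function analytic in $\C_+$), taking $u=\pi n_\L=\arg K\nu$ as in the proof of Lemma \ref{masses} gives $\ti u=-\log|K\nu|+\const$ on $\R$, while $\log|\Res_{\lan}K\nu|=p_n+\const$; the same sign is forced by the example after the corollary, where $\ti u(\pm n^{1/\alpha})=\pi n\cot(\alpha\pi/2)>0$ while $\exp(p_n)$ must be summable. So Proposition \ref{conjugate} as printed carries a sign slip, and your proof of I) should state and use $p_n=-\ti u(\lan)+O(\log|\lan|)$ explicitly instead of leaving the convention "to be pinned down"; with that, I) is exactly \eqref{conditionlog} applied to $\L$, which is balanced of zero density by the discussion preceding Proposition \ref{conjugate}.

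For part III) the issue is not a convention but a reversed inequality, and your sentence "if the roles match up" papers over it. Corollary \ref{discrete} II) requires $\exp p_n=O(\mu(\{\gamma_n\}))$, i.e. $\exp p_n\leqslant C\alpha_n$ on some $\G$; this is what lets the canonical annihilator $\sum(-1)^n\exp(p_n)\delta_{\lan}$ be written as $h\mu$ with $h\in L^\infty(\mu)$, and it matches the $L^1$ criterion in Theorem \ref{BStheorem}. The hypothesis of III) as printed, $\alpha_n=O(\exp p_n)$, is the opposite inequality and does not feed into Corollary \ref{discrete} II) with $\G=\L$; either III) is to be read with the $O$ reversed (in which case your one-line deduction works), or it needs an argument not supplied by the quoted results. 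Part II) is fine as you state it: taking $\G=\L$ in Corollary \ref{discrete} I) gives it verbatim, again using that $\L$ is balanced of zero density.
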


\ms\no For many examples of discrete sequences $\L$ one can easily find a suitable function $u$ and the values of its conjugate
at $\L$.
If, for instance, $\L=\{n^{1/\alpha}\}_{n\geqslant 0},  \ 0<\alpha<1/2$ then one may consider $u$ defined as
$$u(x)=\begin{cases}\pi x^\alpha\textrm{ if }x\in\R_+\\
0\textrm{ if }x\in\R_-
\end{cases}$$
  and
find that
$$\ti u(n^{1/\alpha})=-\pi n\tan\left(\alpha\pi-\frac \pi2\right).$$
In the two-sided case $\L=\{\pm n^{1/\alpha}\}_{n\geqslant 0}, \ 0<\alpha<1$,  one may use $u$ defined as
$$u(x)=\begin{cases}\pi x^\alpha\textrm{ if }x\in\R_+\\
-\pi|x|^\alpha\textrm{ if }x\in\R_-
\end{cases}.$$
Then
$$\ti u(\pm n^{1/\alpha})=-\pi n\tan\left(\alpha\frac\pi 2-\frac\pi 2\right).$$
Such simple calculations and estimates,
together with statements from this section, yield majority of the examples
of discrete measures, whose $L^p$ spaces are not spanned by polynomials, existing in the literature.

\end{document}